\documentclass[12pt,a4paper]{amsart}
\usepackage{amsfonts, amsmath, amssymb, amscd, amsthm, array}

\usepackage{multicol}
\usepackage{subfig}
\usepackage{color}
\usepackage[all]{xy}
\usepackage{hyperref,url}

\hoffset -1.35cm \voffset -1cm \textwidth=6in \textheight=8.7in
\tolerance=9000 \emergencystretch=5pt \vfuzz=2pt
\parskip=1.5mm

\makeatletter
\def\blfootnote{\xdef\@thefnmark{}\@footnotetext}
\makeatother

\newtheorem{thm}{Theorem}[section]
\newtheorem{cor}[thm]{Corollary}
\newtheorem{lemma}[thm]{Lemma}
\newtheorem{prop}[thm]{Proposition}

\theoremstyle{definition}
\newtheorem{qu}[thm]{Question}
\newtheorem{df}[thm]{Definition}

\theoremstyle{remark}
\newtheorem{rem}[thm]{Remark}
\newtheorem{ex}[thm]{Example}


\newcommand{\cA}{\mathcal{A}}
\newcommand{\cC}{\mathcal{C}}

\newcommand{\cV}{\mathcal{V}}

\newcommand{\D}{\mathbb{D}}
\newcommand{\F}{\mathbb{F}}
\newcommand{\N}{\mathbb{N}}
\newcommand{\Z}{{\mathbb Z}}
\newcommand{\R}{\mathbb{R}}
\newcommand{\C}{\mathbb{C}}

\newcommand{\G}{\mathfrak{G}}
\newcommand{\ga}{\Gamma}
\newcommand{\GG}{\Gamma\mathfrak{G}}

\DeclareMathOperator{\link}{link}
\DeclareMathOperator{\st}{star}

\DeclareMathOperator{\adim}{adim} 

\def\coloneqq{\mathrel{\mathop\mathchar"303A}\mkern-1.2mu=}
\newcommand{\gen}[1]{\left\langle#1\right\rangle}

\begin{document}

\title[The Haagerup property is stable under graph products]{The Haagerup property is stable under graph products}
\address{University of Neuch\^{a}tel,
Institut de Math\'{e}matiques, Rue Emile-Argand 11,CH-2000 Neuch\^{a}tel, Switzerland.}
\author{Yago Antol\'{i}n}
\email[Yago Antol\'{i}n]{yago.anpi@gmail.com}

\address{School of Mathematics,
University of Southampton, Highfield, Southampton, SO17 1BJ, United Kingdom.}
\author{Dennis Dreesen}
\email[Dennis Dreesen]{Dennis.Dreesen@soton.ac.uk}


\date{\footnotesize\today}

\begin{abstract} 
The Haagerup property, which is a strong converse of Kazhdan's property $(T)$, has translations and applications in various fields of mathematics such as representation theory, harmonic analysis, operator K-theory and so on. Moreover, this group property implies the Baum-Connes conjecture and related Novikov conjecture. The Haagerup property is not preserved under arbitrary group extensions and amalgamated free products over infinite groups, but it is preserved under wreath products and amalgamated free products over finite groups. In this paper, we show that it is also preserved under graph products. We moreover give bounds on the equivariant and non-equivariant $L_p$-compressions of a graph product in terms of the corresponding compressions of the vertex groups. Finally, we give an upper bound on the asymptotic dimension in terms of the asymptotic dimensions of the vertex groups. This generalizes a result from Dranishnikov on the asymptotic dimension of right-angled Coxeter groups.
\end{abstract}

\keywords{Graph products, Haagerup property, Coarse embeddability, Compression, Asymptotic Dimension}

\subjclass[2010]{22D10 (Unitary representations of locally compact groups), 20F65 (Geometric group theory)}
\maketitle

\setcounter{tocdepth}{1}



\section{Introduction}
A graph product is a natural group-theoretic construction generalizing 
free products and direct product. 
The main examples of graph products are right-angled Artin and Coxeter groups.

Many properties that are stable under free and direct products 
are also stable under graph products. 
Some examples of these properties are 
left-orderability and bi-orderability \cite{Chiswell}, 
soficity \cite{CHR1}, rapid decay \cite{CHR2}, 
residual finiteness \cite{Green}, linearity \cite{HsuWise},  
semihyperbolicity and automaticity \cite{HermillerMeier}
or the Tits alternative \cite{AntolinMinasyan}, just to name a few.

The motivation of this paper is to answer the following question. 
\begin{qu}
Is the Haagerup property stable under graph products?
\end{qu}

We answer this question affirmatively, even when we consider actions on $L_p$-spaces instead of restricting to actions on Hilbert spaces.
Our answer is also quantitative, 
that is we measure ``how Haagerup" the graph product is,
and ``how distorted" a coarse embedding of a graph product into an $L_p$-space needs to be. This is done by 
studying  the compression functions 
of equivariant and non-equivariant embeddings of graph products 
in $L_p$-spaces. 

Finally, we also study the behavior of the asymptotic dimension under
graph products of groups, generalizing a result from Dranishnikov \cite{Dranishnikov}.
This is related to coarse embeddability  since 
groups of finite asymptotic dimension coarsely embed in Hilbert spaces \cite{Roe}.

\subsection{Coarse embeddability, the Haagerup property and compression}
\begin{df}[see \cite{Gromov91}]
Fix $p\geq 1$. A finitely generated group $G$ is {\it coarsely embeddable into an $L_p$-space}, if there exists a measure space $(\Omega,\mu)$, a non-decreasing function $\rho_-:\R^+ \rightarrow \R^+$ such that $\lim_{t\to \infty} \rho_-(t)=+\infty$, a constant $C>0$ and a map $f\colon G\rightarrow L_p(\Omega,\mu)$, such that
\[ \rho_-(d(g,h)) \leq \|f(g)-f(h)\|_p \leq C d(g,h) \ \forall g,h\in G, \]
where $d$ is the word length metric relative to a finite generating subset.
The map $f$ is called a {\it coarse embedding} of $G$ into $L_p(\Omega,\mu)$ and the map $\rho_-$ is called a {\it compression function} for $f$.
\end{df}
Coarse embeddability into the Hilbert space is related to deep conjectures: for example, it is known to imply the Novikov Conjecture \cite{Tu}. Later, in \cite{Yu:3}, the authors prove the same result for embeddings into uniformly convex Banach spaces and this is one of the motivations to study embeddings into $L_p$-spaces for $p\neq 2$.
\begin{df}
Let $p\geq 1$, let $G$ be a topological group and $(\Omega,\mu)$ a measure space. Given an affine isometric action of $G$ on $L_p(\Omega,\mu)$, let $b\colon G\rightarrow L_p(\Omega,\mu), g\mapsto g\cdot 0$, be the orbit map of $0$. We say that $b$ is proper if for every $M\geq 0$, there exists $K\subset G$ compact such that $\|b(g)\|\geq M$ whenever $g\in G\backslash K$.
 A locally compact second countable group $G$ is said to be {\it Haagerup} if there exists an affine isometric action of $G$ on a Hilbert space such that the orbit map of $0$ is proper.
In particular, say that a map $f\colon G\rightarrow L_p(\Omega,\mu)$ is {\it $G$-equivariant}, if there is an affine isometric action $\alpha$ of $G$ on $L_p(\Omega,\mu)$ such that $\forall g,h\in G: f(gh)=\alpha(g)(f(h))$.
If $G$ is finitely generated and equipped with word length relative to a finite generating set, then the Haagerup property is equivalent to the existence of a $G$-equivariant coarse embedding of $G$ into a Hilbert space.
\end{df}
The Haagerup property is a subject of intense study (see \cite{Cherixetal}) and is related to deep conjectures: for example, it is known to imply the Baum-Connes conjecture and associated Novikov conjecture \cite{HK97}, \cite{Tu99}.

In $2004$, Guentner and Kaminker introduce two  numerical invariants to quantify coarse embeddability and the Haagerup property \cite{guekam}.
\begin{df}
Fix $p\geq 1$. Given a finitely generated group $G$ and a measure space $(\Omega,\mu)$, the {\it $L_p$-compression} $R(f)$ of a coarse embedding $f\colon G\rightarrow L_p(\Omega,\mu)$ is defined as the supremum of $r\in [0,1]$ such that
\[ \exists C,D>0, \forall g,h\in G: \frac{1}{C} d(g,h)^r - D \leq \| f(g)-f(h)\| \leq C d(g,h) .\]
The {\it equivariant $L_p$-compression} $\alpha_p^*(G)$ of $G$ is defined as the supremum of $R(f)$ taken over all $G$-equivariant coarse embeddings of $G$ into all possible $L_p$-spaces. Taking the supremum of $R(f)$ over all, also non-equivariant, coarse embeddings, leads to the (non-equivariant) {\it $L_p$-compression} $\alpha_p(G)$ of $G$. If there is no ($G$-equivariant) coarse embedding of the group into an $L_p$-space, then we define the (equivariant) compression to be $0$. One can show that the above definitions do not depend on the chosen finite generating subset.
\end{df}
Equivariant and non-equivariant compression are related to interesting group theoretic properties. 
Indeed, based on a remark by M. Gromov, it was shown that the equivariant and non-equivariant $L_2$-compression are equal for amenable groups (see \cite{CTV}). Moreover, if the equivariant $L_p$-compression of a finitely generated group is greater than $\max(\frac{1}{2},\frac{1}{p})$, then the group is amenable \cite{naoper}. This provides a partial converse to the statement that amenable groups satisfy the Haagerup property. In the non-equivariant setting, we have the result that finitely generated groups with non-equivariant $L_2$-compression greater than $1/2$ satisfy non-equivariant amenability, i.e. property $A$ (which is equivalent to exactness of the reduced $C^*$-algebra) \cite{guekam}.

\subsection{Graph products}
Let $\Gamma$ be a simplicial graph (i.e. without loops or multiple edges). We will use $V\Gamma$ 
and $E\Gamma$ to denote the set of vertices and the set of edges of $\Gamma$ respectively.
An edge can be considered as a $2$-element subset of $V\Gamma$.

\begin{df}\label{def:gp}
Let $\ga$ be a finite simplicial graph. Suppose that $\mathfrak{G}=\{G_v \mid v\in V\ga\}$ is a collection of groups (called \textit{vertex groups}).
The \emph{graph product} $\ga \mathfrak{G}$, of this collection of groups with respect to $\ga$,
is the group obtained from the free product of the $G_v$, $v \in V\ga$, by adding the relations
$$[g_v, g_u]=1  \text{ for all }  g_v\in G_v,\, g_u\in G_u \text{ such that $\{v,u\}$ is an edge of } \ga.$$
\end{df}
Here $[a,b]=a^{-1}b^{-1}ab$ denotes the commutator of $a$ and $b$.

When $\ga$ has no edges, $\GG$ is a free product, 
and when  $\ga$ is a complete graph, 
$\GG$ is a direct product of the groups $G_v$, $v \in V\ga$. 
Graph products were first introduced and studied by E. Green 
in her Ph.D. thesis \cite{Green}. 

When all vertex groups are infinite cyclic, 
$\GG$ is a \textit{right-angled Artin group} 
(also known as graph group or pc-group) 
and when all vertex groups are cyclic of order two, 
$\GG$ is a \textit{right-angled Coxeter groups}.

The structure of graph products can be described in terms of simpler graph products using amalgams (see Lemma \ref{lem:A*_CB}) or semi-direct products (see Lemma \ref{lemma:splitsequence}). Dadarlat and Guentner showed that the class of finitely generated groups embedding coarsely into a Hilbert space is closed under direct products and amalgamated free products \cite{guedad}. This implies that this class is also closed under graph products. On the other hand, the group 
$$\mathrm{SL}_2(\mathbb{Z})\ltimes \mathbb{Z}^2= (\mathbb{Z}/6\mathbb{Z} \ltimes \mathbb{Z}^2)*_{(\mathbb{Z}/2\mathbb{Z}\ltimes \mathbb{Z}^2)} (\mathbb{Z}/4\mathbb{Z} \ltimes \mathbb{Z}^2)$$
is not Haagerup since it has relative property (T) with respect to the normal subgroup (see \cite{Cherixetal}), so 
 the class of groups with the Haagerup property is {\em not} closed under amalgamated free products over infinite groups and under general group extensions. So in the light of Lemmas \ref{lem:A*_CB} and \ref{lemma:splitsequence}, it can be can be considered to be surprising that the Haagerup property is invariant under graph products. Our proof makes crucial use of normal forms of
graphs products, which, in some sense, reveal the CAT(0) cubical geometry behind these groups.

In this respect, we mention another interesting result due to de Cornulier, Stalder and Valette \cite{CSV}. They show that the Haagerup property is stable under wreath products although it is not stable under general group extensions.

\subsection{Outline and main results.}
It was proven by Niblo and Reeves \cite{NibloReeves} 
that groups acting properly on  CAT(0) cube complexes 
have the Haagerup property. 
In particular, right-angled Artin groups have the Haagerup property. 

In Section \ref{s:equivemb}, we construct a proper affine isometric action on a Hilbert space of a graph product of groups with the Haagerup property. Our main Theorem \ref{thm:main} below gives 
a new proof for right-angled Artin groups to be Haagerup 
which is independent from the one of \cite{NibloReeves}.

\noindent{\bf Theorem \ref{thm:main}.} {\it Let $p\geq 1$,  $\ga$ be a simplicial countable graph 
and $\mathfrak{G}=\{G_v\mid v\in V\Gamma\}$ a collection of discrete groups. The graph product 
$G=\ga \mathfrak{G}$ admits a proper affine isometric action on an $L_p$-space if and only if all the 
groups $G_v$ admit proper affine isometric actions on $L_p$-spaces.}

In particular a graph product of groups having the Haagerup property has the Haagerup property.

Next, in Section \ref{s:equicomp}, we study the equivariant $L_p$-compression 
of graph products in terms of the equivariant compression of the vertex groups.
Our main result of this section (Theorem 
\ref{thm:maineqcom}) implies the following corollary.
\medskip

\noindent{\bf Corollary \ref{cor:eqsimplecompression}.} {\it
Let $G=\GG$ be a graph product containing a quasi-isometrically embedded non-abelian free group. The equivariant $L_p$-compression of $G$ satisfies   
$$\min_{v\in V\ga}\left(\frac{1}{p}, \alpha_p^*(G_v)  \right)\leq \alpha^*_p(G)\leq \min_{v\in V\ga}\left(\max \left(\frac{1}{2},\frac{1}{p}\right), \alpha_p^*(G_v)\right).$$
}

In Section \ref{s:nequicom}, we  drop the condition that the embedding of $G$ in an $L_p$-space 
is $G$-equivariant and we study the compression for any coarse embedding of $G$ into 
an $L_p$-space. We describe possible compression functions in Theorem \ref{thm:coarseemb}.
As an immediate corollary, we obtain the value of the non-equivariant $L_p$-compression, 
for $G=\GG$ in terms of the non-equivariant $L_p$-compression of the vertex groups. Precisely,
\medskip

\noindent{\bf Corollary \ref{cor:neqcomp}.} {\it Let $\ga$ be a finite graph, $\G$ a collection of finitely generated groups indexed by $V\ga$ and $G=\GG$ the corresponding graph product. 
For each $p\geq 1$, we have 
\[ \alpha_p(G)=\min\{\alpha_p(G_v)\mid v\in V\Gamma\} .\]
}

Dranishnikov \cite{Dranishnikov} proved that right-angled Coxeter groups
have finite asymptotic dimension and gives an upper bound.  
In Section \ref{s:adim}, using the techniques of \cite{Dranishnikov},
 we show the following.
\medskip

\noindent{\bf Theorem \ref{thm:adim}.} {\it 
Let $\ga$ be a finite simplicial graph and let $\G$ be a family of finitely generated groups indexed by vertices of  $V\ga$. Let $G=\GG$. 
Let $\cC$ be the collection of subsets of $V\ga$ spanning a complete graph.
Then
\begin{equation*}
\adim G \leq \max_{C\in \cC}\sum_{v\in C}\max(1,\adim G_v). \end{equation*}
}


\section{Graph products}\label{s:graphproduct}

Let $\Gamma=(V\ga,E\ga)$ be a simplicial graph. 
For any subset $A\subseteq V\Gamma$, we will denote by $\Gamma_A$ 
 the \textit{full subgraph} of $\Gamma$  with vertex set $A.$ That is,
$V\Gamma_A=A$ and for $a_1,a_2\in A$, $\{a_1,a_2\}\in E\Gamma_A$ if and only if $\{a_1,a_2\}\in E\Gamma$.

The \emph{link} of a vertex $v \in V\Gamma$, denoted $\link_{\Gamma}(v)$, is the subset of vertices adjacent to $v$ (excluding $v$ itself); in other words,
$\link_{\Gamma}(v)\coloneqq \{u\in V\Gamma \mid \{v,u\}\in E\Gamma \}.$ 
The \emph{star} of a vertex $v \in V\Gamma$ , denoted $\st_{\Gamma}(v)$, is the subset of vertices adjacent to $v$ including $v$ itself. Therefore  $\st_{\ga}(v)=\link_{\ga}(v)\cup \{v\}$.

Let $\G=\{G_v\mid v\in V\ga\}$ be a collection of groups and let $G=\GG$ be 
the graph product of $\G$ with respect to $\ga$. It is clear from Definition \ref{def:gp} that the vertex groups are isomorphically embedded in the graph product.

Also, Definition \ref{def:gp} implies that the set $\sqcup_{v\in V\ga} G_v$ is a generating set for $G$.
Thus, any element $g\in G$  may be represented as a \emph{word} $W \equiv (g_1,g_2,\dots, g_n)$
where each $g_i$, called a \emph{syllable} of $W$, 
is an element of some $G_v$ and $g=g_1g_2\dots g_n$.
Let $W \equiv (g_1,g_2,\dots, g_n)$ be a word and suppose that for every $1\leq i \leq n$, $g_i\in G_{v_i}$. The word $W$ is {\it reduced} if 
\begin{equation}\label{eq:reduced}
\mbox{ for  every }1 \le i < j \le n,\mbox{if }v_i=v_j \mbox{ then } g_{i+1}\dots g_{j-1}\notin G_{\st(v_i)}\coloneqq\gen{\cup_{u\in \st(v_i)} G_u}.
\end{equation}

The normal form theorem
(\cite[Thm. 3.9]{Green} or \cite[Thm 2.5]{HsuWise}) states that every element
$g\in \Gamma \mathfrak{G} $ can be represented by a reduced word and a reduced word represents the identity 
if and only if it is the empty word.

Let $g\in G$ and $W\equiv(g_1,\dots,g_n)$ be a reduced word representing $g.$ 
We define the \emph{syllable length}  of $g$ in $G$ to be $|g|_\Gamma=n$. This is well defined and independent of the chosen reduced word representing $g$.

For any subset $A\subseteq V\Gamma,$ the subgroup $G_A\leqslant G$, 
generated by $\{G_v \mid v\in A\}$, is called \textit{special}; according to a standard convention, $G_\emptyset=\{1\}$. 
Every $G_A$ is the graph product
of the groups $\{G_v \mid v \in A\}$ with respect to the graph $\Gamma_A$. 
It is also easy to see that there is a \textit{canonical retraction} 
$\rho_A\colon G \to G_A$ defined (on the generators of $G$) by
$\rho_A(g)\coloneqq g$ for each $g \in G_v$ with $v \in A$,
and $\rho_A(h)\coloneqq 1$ for each $h \in G_u$ with $u \in V\Gamma \backslash A$.

\section{Equivariant embeddability of graph products}\label{s:equivemb}

The main result in this section is Theorem \ref{thm:main}, where given $p\geq 1$, a finite graph of groups $\Gamma$ 
and corresponding proper affine isometric actions of the vertex groups on $L_p$-spaces, 
we construct explicitly a proper affine isometric action of $\GG$ on an $L_p$-space. 
The second author gave such a construction in the free product case which was later generalized to amalgamated free products over finite groups by the second author jointly with Thibault Pillon. Independently, the same construction was also given by Micha\l{}  Marcinkowski. 

\begin{df}
Let $\cV$ be a vector space. The {\it affine group} of $\cV$ is the group
$$\mathrm{Aff}(\cV)=\{\phi \colon \cV\to \cV  \mid  \phi(v)= Av+b; \, A\in \mathrm{GL}(\cV), b\in \cV\} .$$

Let $G$ be a discrete group.
An {\it affine  action} of $G$ on a vector space $\cV$ is a group 
homomorphism $(\pi,b) \colon G \to \mathrm{Aff}(\cV),$ where  $(\pi,b) (g) \colon \cV\to \cV$ is given by $v\mapsto \pi(g)(v)+b(g)$. In particular  $\pi\colon G\to \mathrm{GL}(\cV)$
(called the {\it linear} part) is a group homomorphism and $b\colon G\to \cV$ (called the {\it translation} part) is a 1-cocycle for $\pi$, that is
$$b(gh)=\pi(g)b(h)+b(g) \; \forall g,h\in G. $$

Suppose that  $(\cV, \|\cdot \|)$ is a normed vector space. Saying that the action $(\pi,b)$ is isometric boils down to saying that the linear part is an isometry, i.e. $\|\pi(g)(v)\|=\|v\|$ 
for all $v\in \cV$ and $g\in G$. The action  is {\it proper} if  for every $M>0$ and every $v\in \cV$ there is a finite subset $F$ of $G$ such that $\|\pi(g)(v)+b(g)\|>M$ for all $g\in G\backslash F$.
\end{df}

\begin{rem}\label{rem:b-proper}
An affine isometric action $(\pi, b)$ on $(\cV,\|\cdot \|)$ is proper if and only if the map $b\colon G\to \cV$ is proper, that is,  for every $M>0$ there is a finite subset $F$ of $G$ such that $\|b(g)\|>M$ for all $g\in G\backslash F$.
\end{rem}

\begin{prop}\label{prop:main}
Let $\ga=(V,E)$ be a simplicial graph, 
$\mathfrak{G}=\{G_v\mid v\in V\}$ a collection of groups 
together with  a proper affine isometric action $(\pi_v,b_v)$ of $G_v$ on an $L_p$-space. 
Let $G=\GG$. 
There exists an affine isometric action $(\tau,\beta)$ of $G$ on an $L_p$-space satisfying that 
for every $g\in G$ and every reduced word $(g_1,\dots, g_n)$ representing $g$ 
\begin{equation}\label{eq:sumbi}
\|\beta (g)\|^p=\|\beta(g_1\cdots g_n)\|^p=\sum_{i=1}^n \| b_{v_i}(g_i)\|^p.
\end{equation}
where for $i=1,\dots, n$, $v_i\in V\ga$ and $g_i\in G_{v_i}$.
\end{prop}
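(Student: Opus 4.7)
The plan is to build $(\tau, \beta)$ as an $\ell_p$-direct sum indexed by $V\ga$, with the $v$-summand an ``induced'' action of $G_v$ from $G_{\st(v)}$ up to $G$. Since $G_{\st(v)} = G_v \times G_{\link(v)}$, let $\rho_v \colon G_{\st(v)} \twoheadrightarrow G_v$ be the projection and extend $(\pi_v, b_v)$ to an affine isometric action of $G_{\st(v)}$ on $\cH_v$ via $\rho_v$. Form the bundle-like space $\cV_v$ of equivalence classes $[y, w]$, $y \in G$, $w \in \cH_v$, subject to $[yh, w] = [y, \pi_v(\rho_v(h))w]$ for $h \in G_{\st(v)}$; it is canonically identified with $\ell_p(G/G_{\st(v)}; \cH_v)$, is an $L_p$-space, and carries an isometric $G$-action $\tau_v(g)[y, w] \coloneqq [gy, w]$. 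Set $\cV = \bigoplus_v \cV_v$ and $\tau = \bigoplus_v \tau_v$.

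For any word $W = (g_1, \ldots, g_n)$ with $g_i \in G_{v_i}$ (not necessarily reduced), define
\[
\widetilde\beta_v(W) = \sum_{i\,:\,v_i = v}\,[g_1 \cdots g_{i-1},\, b_v(g_i)] \in \cV_v,
\]
and put $\beta_v(g) = \widetilde\beta_v(W)$ for any reduced $W$ representing $g$. Two reduced words for $g$ differ by a sequence of swaps of adjacent commuting syllables (a standard consequence of graph-product normal form theory). Under such a swap between positions $i$ and $i+1$ with $\{v_i, v_{i+1}\} \in E\ga$, the prefix of any $v$-syllable involved changes by a factor in $G_{\link(v)}$, which has trivial image under $\rho_v$; hence $[\cdot, b_v(\cdot)]$ is unchanged, proving $\beta_v$ is well-defined. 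For the norm identity (\ref{eq:sumbi}), the key point is that prefixes of distinct $v$-syllables of a reduced form lie in distinct cosets of $G_{\st(v)}$: if positions $i < j$ both had vertex $v$ and shared a coset, then $g_i \cdots g_{j-1} \in G_{\st(v)}$, hence $g_{i+1} \cdots g_{j-1} \in G_{\st(v)}$, contradicting the reducedness condition (\ref{eq:reduced}). The contributions then live in $\ell_p$-summands indexed by distinct cosets, giving $\|\beta_v(g)\|^p = \sum_{v_i = v} \|b_v(g_i)\|^p$; summing over $v$ yields (\ref{eq:sumbi}).

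To prove the cocycle identity $\beta(gh) = \tau(g)\beta(h) + \beta(g)$, the plan is to show $\widetilde\beta_v$ is invariant under both reduction moves on arbitrary words: swaps (handled above) and fusions of adjacent syllables $a, b \in G_v$ at a position with prefix $P$. For a fusion,
\[
[P, b_v(a)] + [Pa, b_v(b)] = [P, b_v(a)] + [P, \pi_v(a)b_v(b)] = [P,\, b_v(a) + \pi_v(a)b_v(b)] = [P, b_v(ab)],
\]
using $\rho_v(a) = a$ and the cocycle identity for $b_v$; when $ab = e$ both sides collapse to $0$. Now, for reduced forms $(g_1, \ldots, g_m)$ of $g$ and $(h_1, \ldots, h_n)$ of $h$, the concatenation $W$ represents $gh$ and the definitions give
\[
\widetilde\beta_v(W) = \sum_{v_i = v}[g_1 \cdots g_{i-1},\, b_v(g_i)] + \sum_{w_j = v}[g h_1 \cdots h_{j-1},\, b_v(h_j)] = \beta_v(g) + \tau_v(g)\beta_v(h).
\]
Reducing $W$ to a reduced form of $gh$ via swaps and fusions leaves $\widetilde\beta_v$ unchanged, so the left side equals $\beta_v(gh)$. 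The main technical delicacy is the invariance of $\widetilde\beta_v$ under fusion — the calculation above is short, but it crucially relies on the splitting $G_{\st(v)} = G_v \times G_{\link(v)}$ (so $\rho_v|_{G_v} = \mathrm{id}$) combined with the $b_v$-cocycle identity; once that is in hand, the rest is bookkeeping.
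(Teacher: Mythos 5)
Your proposal is correct and is essentially the paper's construction: both place the contribution of the $i$-th syllable $g_i$ at the coset $(g_1\cdots g_{i-1})G_{\st(v_i)}$ inside an $\ell_p$-sum indexed by $\bigsqcup_{v}G/G_{\st(v)}$, and both derive \eqref{eq:sumbi} from the observation that reducedness forces these cosets to be pairwise distinct. The only differences are organizational: you package the linear part as the induced representation of $\pi_v\circ\rho_v$ from $G_{\st(v)}$ (with fibre the single space $A_v$ rather than the full direct sum used in the paper), and you verify well-definedness of the cocycle via invariance under the normal-form rewriting moves on arbitrary words, whereas the paper checks the cocycle condition on the two types of defining relations (WD1)--(WD2) --- these amount to the same computations.
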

\begin{proof}
Let $(\pi_v,b_v)\colon G_v\to \mathrm{Aff}(A_v)$ be a proper affine isometric action 
on an $L_p$-space $A_v$. 
Let $A=\oplus_{v\in V}^p A_v$ be the $l^p$-direct sum of the $A_v$, i.e. $\|(a_v)_{v\in V}\|^p=\sum_{v\in V} \|a_v\|^p$ for every $(a_v)_{v\in V} \in A$. Note that each vertex group $G_v$ admits a unitary action $\pi_v$ on $A_v$ and so by defining the action of $G_v$ on $A\ominus A_v$ to be trivial, we can extend this action to an action $\widetilde{\pi_v}$ of $G_v$ on $A$. Given any  word $(g_1,g_2,\ldots ,g_n)$ representing $g$, we then define
\[ \pi(g)\colon A\rightarrow A, a\mapsto \widetilde{\pi_{v_1}}(g_1) \widetilde{\pi_{v_2}}(g_2)\ldots \widetilde{\pi_{v_n}}(g_n) a, \]
where $g_i\in G_{v_i}$. This is clearly a linear isometry of $A$ and defines a group homomorphism  $\pi \colon G\to \mathrm{GL}(A)$.

Let $T=\bigsqcup_{v\in V} G/G_{\st(v)}$. 
We are going to build an affine isometric action on the $L_p$-space $\oplus_{t\in T}^p A$. 
Formally we will consider the isomorphic space
$$\cA=\left\lbrace f\colon  \bigsqcup_{v\in V} G/G_{\st(v)}\to A \quad \vrule\quad  \sum_{x\in \bigsqcup_{v\in V} G/G_{\st(v)}} \|f(x)\|^{p}<\infty\right\rbrace\cong \oplus^p_{t\in T} A.$$
We define a linear action $\tau \colon G\to \mathrm{GL}(\cA)$ by  
$$(\tau(g)f)(t)=\pi(g)f(g^{-1}t),\; \mbox{for }g\in G, f\in \cA, t\in T.$$
An easy calculation shows that this action is also isometric, i.e. $\sum_{t\in T} \|f(t)\|^p=\sum_{t\in T} \|(\tau(g) f)(t)\|^p$.

For $a\in A$ and $t\in T$, let  $a\chi_t$ denote the element of $\cA$ given by 
$a\chi_t(t')=a$ if $t=t'$ and $a\chi_t(t')=0$ if $t\neq t'$. 

We are going to build a $1$-cocycle for $\tau$. To this end, given $v\in V$, denote first $i_v\colon A_v\rightarrow A$ the natural inclusion map. Then, for every $v\in V$, $g_v\in G_v$, 
let $\beta(g_v)\in \cA$ be the function  $i_v(b_v(g_v))\chi_{1\cdot G_{\st(v)}}$,
 that is 
$$\beta(g_v)(t)=
\begin{cases} 
i_v(b_v(g_v)) &\mbox{if }t=1\cdot G_{\st(v)}\\
0 &\mbox{otherwise}.
\end{cases}$$
We extend $\beta$ to a function $\beta\colon G\to \cA$ using the $1$-cocycle relation. We need to check that $\beta$ is well defined. For that, we need to show that
\begin{enumerate}
\item[(WD1)] For every $v\in V$ and $g,h\in G_v$, we have $\beta(gh)=\tau(g)\beta(h)+\beta(g)$.
\item[(WD2)] For every $\{u,v\}\in E$, $g\in G_v$ and $h\in G_u$,
$\beta(gh)=\beta(hg)$.
\end{enumerate}
In order to simplify the exposition, we allow ourselves to abusively write $b_v(g)$ for $i_v(b_v(g))$ when there is no risk for confusion.

(WD1). We have that for all $t\in T$
\begin{align*}
\tau(g) b_v(h)\chi_{1\cdot G_{\st(v)}}(t) 
& =
(\pi(g) b_v(h))\chi_{1\cdot G_{\st(v)}}(g^{-1}t) \\
& =
(\pi_v(g) b_v(h))\chi_{g\cdot G_{\st(v)}}(t) \\
&= 
(\pi_v(g) b_v(h))\chi_{1\cdot G_{\st(v)}}(t)\mbox{ since }g\in G_v\leq G_{\st(v)}.
\end{align*}
Thus $\tau(g)\beta(h)+\beta(g)
= 
(\pi_v(g) b_v(h)+b_v(g))\chi_{1\cdot G_{\st(v)}}$
which is equal to $b_v(gh) \chi_{1\cdot G_{\st(v)}}$ as required.
This completes the proof of (WD1).

(WD2). By the $1$-cocycle relation $\beta(gh)=\tau(g)\beta(h)+\beta(g)$. Notice that for $t\in T$,
\begin{align*} 
\tau(g)\beta(h)(t)
&=
(\pi(g)b_u(h))\chi_{1\cdot G_{\st(u)}}(g^{-1}t)\\
&=
b_u(h)\chi_{1\cdot G_{\st(u)}}(g^{-1}t) \mbox{ since }g\in G_v, u\neq v \\
&=
b_u(h)\chi_{g\cdot G_{\st(u)}}(t)\\
&=
b_u(h)\chi_{1\cdot G_{\st(u)}}(t)\mbox{ since }g\in G_v\leq G_{\st(u)}.\\
\end{align*} 
Thus $\beta(gh)=b_u(h)\chi_{1\cdot G_{\st(u)}}+b_v(g)\chi_{1\cdot G_{\st(v)}}
=
\beta(g)+\beta(h)$.
A similar argument shows that $\beta(hg)=\beta(g)+\beta(h)$. This completes the proof of (WD2).

Let $W\equiv (g_1\cdots g_n)$ be a reduced word over the alphabet $\sqcup_{v\in V}G_v$.
Suppose that for $i=1,\dots, n$, $g_i\in G_{v_i}$. 
Then using the $1$-cocycle relation we get that
\begin{equation}\label{eq:betared}
\beta(g_1\cdots g_n)
=
\sum_{i=1}^n \pi(g_1\dots g_{i-1})b_{v_i}(g_i)\chi_{(g_1\cdots g_{i-1})G_{\st(v_i)}}.
\end{equation}
We claim that for all $1\leq i<j\leq n$,
$
(g_1\cdots g_{i-1}) G_{\st(v_i)}
\neq
(g_1\cdots g_{j-1}) G_{\st(v_j)}$.
Indeed, if $$
(g_1\cdots g_{i-1}) G_{\st(v_i)}
=
(g_1\cdots g_{j-1}) G_{\st(v_j)}$$
then $v_i=v_j$ and $g_{i}\cdots g_{j-1}\in G_{\st(v_i)}$, which contradicts the fact that $W$ is reduced (recall \eqref{eq:reduced}).

We conclude that 
\begin{equation}
\|\beta(g_1\cdots g_n)\|^p=\sum_{i=1}^n \| b_{v_i}(g_i)\|^p,
\end{equation}
as required.
\end{proof}

\begin{lemma}\label{lem:trick}
Let $(\pi,b)$ be a proper affine isometric action of some group $G$ on an $L_p$-space. Let $C>0$ be any real number.
Then there exists a proper affine isometric action $(\pi', b')$ of $G$ on an $L_p$-space satisfying that $\|b'(g)\|^p\geq \|b(g)\|^p+2C^p$ for all $g\in G\backslash \{1\}$. 
\end{lemma}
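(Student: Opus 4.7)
The plan is to produce $(\pi', b')$ by taking a direct sum of $(\pi, b)$ with a cheap coboundary on the left regular representation. Concretely, I would consider the $L_p$-space $\ell^p(G)=L_p(G,\mu)$ where $\mu$ is counting measure, and the left regular representation $\lambda\colon G \to \mathrm{GL}(\ell^p(G))$ given by $(\lambda(g)f)(h)=f(g^{-1}h)$, which is a linear isometric action.

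The key observation is that any vector $\xi \in \ell^p(G)$ yields a $1$-cocycle for $\lambda$ via the coboundary $b_\xi(g) = \lambda(g)\xi - \xi$. Choosing $\xi = C \delta_e$, one gets $b_\xi(g) = C(\delta_g - \delta_e)$, and a direct computation shows
\begin{equation*}
\|b_\xi(g)\|_p^{\, p} = |C|^p\bigl(|\delta_g(g)|^p + |\delta_e(e)|^p\bigr) = 2C^p \quad \text{for all } g \in G \setminus \{1\}.
\end{equation*}
In particular this coboundary is bounded below by $2C^p$ on non-trivial elements, which is precisely the quantity we need to inject.

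I would then define the new action on the $L_p$-space $\cA \oplus_p \ell^p(G)$ (where $\cA$ is the $L_p$-space hosting $(\pi,b)$) by the diagonal linear part $\pi'(g) = \pi(g) \oplus \lambda(g)$ and the translation part $b'(g) = (b(g), b_\xi(g))$. Since $b$ and $b_\xi$ are $1$-cocycles for $\pi$ and $\lambda$ respectively, $b'$ is a $1$-cocycle for $\pi'$, and isometry is preserved by the $\ell^p$-direct sum. By construction,
\begin{equation*}
\|b'(g)\|^p = \|b(g)\|^p + \|b_\xi(g)\|^p = \|b(g)\|^p + 2C^p \quad \text{for all } g \in G \setminus \{1\},
\end{equation*}
which is the inequality required.

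Finally, properness of $(\pi', b')$ follows for free from properness of $(\pi, b)$: since $\|b'(g)\| \geq \|b(g)\|$, and by Remark \ref{rem:b-proper} properness of the affine action is equivalent to properness of the $1$-cocycle, any finite subset witnessing properness for $b$ also witnesses properness for $b'$. There is essentially no obstacle here, as the construction only requires that a coboundary on the regular representation give a uniform positive lower bound on non-identity elements, which is immediate.
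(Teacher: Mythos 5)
Your proof is correct and is essentially identical to the paper's: the authors also take the left regular representation on $\ell^p(G)$ and the coboundary $b_1(g)=C\chi_g-C\chi_1$ (which is exactly your $\lambda(g)(C\delta_e)-C\delta_e$), then form the $\ell^p$-direct sum with $(\pi,b)$. Your explicit remark that properness is inherited because $\|b'(g)\|\geq\|b(g)\|$ is a small bonus the paper leaves implicit.
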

\begin{proof}
Let $\pi_1$ be the natural isometric action of $G$ on $\ell^p(G)$ by left translation. For $a\in \C$ and $g\in G$, let $a\chi_g$ be the sequence given by $a\chi_g(h)=a$ if $g=h$ and zero otherwise. Set $b_1(g)=C\chi_g-C\chi_1$, this is a $1$-cocycle for $\pi_1$ and $\|b_1(g)\|^p=2C^p$ for all $g\in G\backslash \{1\}$. Let $(\pi',b')=(\pi\oplus\pi_1, b\oplus b_1)$. Clearly $\|b'(g)\|^p=\|b(g)\|^p+2C^{p}$.
\end{proof}

We can now prove the main result of this section.
\begin{thm}\label{thm:main}
Let $p\geq 1$,  $\ga$ be a countable simplicial graph and $\mathfrak{G}=\{G_v\mid v\in V\Gamma\}$ a collection of discrete groups. The graph product $G=\ga \mathfrak{G}$ admits a proper affine isometric action on an $L_p$-space if and only if all the groups $G_v$ admit proper affine isometric actions on $L_p$-spaces.
\end{thm}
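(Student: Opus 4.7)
The plan is two-directional, and the forward direction is essentially immediate: the normal form theorem identifies each $G_v$ with a subgroup of $G = \Gamma\mathfrak{G}$, so restricting any proper affine isometric action $(\pi, b)$ of $G$ to $G_v$ along the inclusion yields a proper affine isometric action of $G_v$, because the preimage of a ball under $b|_{G_v}$ is contained in the preimage under $b$ and hence finite. The substance of the argument is the reverse direction, which I would obtain by combining Proposition \ref{prop:main} with Lemma \ref{lem:trick} and an enumeration of the vertex set.

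For the ``if'' direction, the idea is to first \emph{inflate} the given cocycles so that the sum formula \eqref{eq:sumbi} simultaneously detects syllable length and, when $V\Gamma$ is infinite, which vertices appear. Concretely, fix an enumeration $V\Gamma = \{v_1, v_2, \ldots\}$ and for each $i$ apply Lemma \ref{lem:trick} with $C = i^{1/p}$ to replace $(\pi_{v_i}, b_{v_i})$ by a proper affine isometric action $(\pi_{v_i}', b_{v_i}')$ on an $L_p$-space satisfying
$$\|b_{v_i}'(h)\|^p \geq 2i \quad \text{for every } h \in G_{v_i} \setminus \{1\}.$$
Feeding the family $\{(\pi_v', b_v')\}_{v \in V\Gamma}$ into Proposition \ref{prop:main} then yields an affine isometric action $(\tau, \beta)$ of $G$ on an $L_p$-space.

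To finish I would verify properness of $\beta$ via Remark \ref{rem:b-proper}. Given $M > 0$, any $g \in G$ with $\|\beta(g)\|^p \leq M^p$ admits a reduced word $(g_1, \ldots, g_n)$ with $g_j \in G_{v_{k_j}} \setminus \{1\}$; the identity \eqref{eq:sumbi} together with the lower bound above gives $\sum_{j=1}^n 2k_j \leq M^p$, forcing $n \leq M^p/2$ and $k_j \leq M^p/2$ for all $j$, so only finitely many vertices can occur. For each such vertex $v_k$, the bound $\|b_{v_k}'(g_j)\|^p \leq M^p$ combined with properness of $b_{v_k}'$ confines $g_j$ to a finite subset of $G_{v_k}$, and therefore only finitely many $g$ are consistent with these constraints. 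The main subtlety, and the sole reason the inflation step is needed, is the case of infinite $V\Gamma$: without it, single-syllable elements drawn from different $G_{v_i}$ could produce an unbounded collection inside a single $\beta$-ball. Choosing $C = i^{1/p}$ turns the enumeration index into a quantitative penalty and resolves this; for finite $V\Gamma$ this step is unnecessary.
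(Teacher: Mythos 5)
Your proposal is correct and follows essentially the same route as the paper: both directions are handled identically, and the ``if'' direction uses the same enumeration of $V\Gamma$, the same inflation of the cocycles via Lemma \ref{lem:trick} so that the norm lower bound grows with the vertex index, and the same application of the sum formula \eqref{eq:sumbi} to confine a $\beta$-ball to boundedly many syllables drawn from finitely many vertex groups and finitely many elements in each. The only difference is the cosmetic choice $C=i^{1/p}$ in place of the paper's normalization $\|b_{v_i}(g)\|\geq i$, which changes nothing in the argument.
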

\begin{proof}
Since admitting proper affine isometric actions is inherited by subgroups 
the only if part is clear. 
Conversely, we assume that every vertex groups admits a proper affine isometric action $(\pi_{v_i},b_{v_i})$ on an $L_p$-space.

Enumerating the vertices of $\ga$ as $\{v_1,v_2,\ldots \}$, we can use Lemma \ref{lem:trick} to assume that 
\begin{equation}
\label{eq:big}
\|b_{v_i}(g)\|\geq i,
\end{equation}
for any $i$. Fix any $M>0$ and assume $\|\beta(g)\|\leq M$. By Equation \eqref{eq:sumbi}, if  $(g_1,g_2,\ldots ,g_n)$ is a reduced word representing $g$, then since $\|b_v(g)\|\geq 1$ for all $g\in G_v$ and $v\in V$, it follows that $M^p\geq \|\beta(g)\|^p\geq |g|_{\ga}$, and in particular $n\leq M^p$. Moreover,  as $M\geq \|\beta(g)\| \geq \|b_{v(g_i)}(g_i)\|$, we have by Equation \eqref{eq:big} that $g_i\notin G_{v_j}$ for $j>M$, so the $g_i$ lie in finitely many vertex groups. By properness of the $b_v$, there are only finitely many choices for the elements $g_i$. Hence, the set $\{g\in G \mid   \|\beta (g)\| <M \}$ is finite and $\beta$ is a proper $1$-cocycle.
\end{proof}

In particular, setting $p=2$, we obtain the following as an immediate corollary.
\begin{cor}
Let $\ga$ be a countable simplicial graph and $\mathfrak{G}=\{G_v\mid v\in V\Gamma\}$ a collection of discrete groups with the Haagerup property. Then $G=\GG$, equipped with the discrete topology, has the Haagerup property.
\end{cor}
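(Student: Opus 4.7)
The plan is essentially trivial: this corollary is a direct specialization of Theorem \ref{thm:main} to $p=2$. I would first recall that for a discrete (and hence locally compact second countable, assuming $G$ is countable) group, the Haagerup property is equivalent to the existence of a proper affine isometric action on a Hilbert space. Since every Hilbert space is an $L_2$-space (and conversely every $L_2(\Omega,\mu)$ is a Hilbert space), the Haagerup property of each vertex group $G_v$ is precisely the statement that $G_v$ admits a proper affine isometric action on an $L_2$-space.

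Next I would verify that the hypotheses of Theorem \ref{thm:main} are in place: $\ga$ is a countable simplicial graph, $\mathfrak{G}$ is a collection of discrete groups, and each $G_v$ admits a proper affine isometric action on an $L_2$-space by the Haagerup hypothesis. Applying Theorem \ref{thm:main} with $p=2$ therefore produces a proper affine isometric action $(\tau,\beta)$ of $G = \GG$ on an $L_2$-space, i.e.\ on a Hilbert space.

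Finally, I would note that $G$ is countable (being generated by countably many countable vertex groups), so equipped with the discrete topology it is locally compact second countable, and hence the proper affine isometric action on a Hilbert space furnished by Theorem \ref{thm:main} witnesses that $G$ has the Haagerup property. There is no genuine obstacle to overcome; the only mild subtlety is the identification of Hilbert spaces with $L_2$-spaces and the verification that $G$ itself is countable so that the definition of Haagerup applies.
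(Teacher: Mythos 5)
Your proposal is correct and is exactly the paper's argument: the authors state this corollary as an immediate consequence of Theorem \ref{thm:main} with $p=2$, using the identification of Hilbert spaces with $L_2$-spaces and the characterization of the Haagerup property via proper affine isometric actions. Your additional remark on countability of $G$ is a reasonable (if routine) check that the paper leaves implicit.
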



\section{The behaviour of equivariant compression under graph products}
\label{s:equicomp}

In \cite{guekam}, the authors introduce a numerical invariant called equivariant compression to quantify {\em how Haagerup} a finitely generated group really is. This invariant can also be studied for equivariant embedding of groups into Banach spaces in general.

The equivariant $L_2$-space compression contains interesting information about the group: e.g.  $\alpha_2^*(G)>1/2$ implies amenability of $G$ (see \cite{guekam}).

\begin{ex}\label{ex:diedral}
Let $G=\Z$ or $(\Z/2\Z)*(\Z/2\Z)\cong \Z\rtimes \Z_2$. It is easy to see that there is an affine isometric action $(\pi, b)$ of $G$ on an $L_p$ space, namely $\mathbb{R}$, satisfying that $\|b(g)\|= |g|$. Thus, $\alpha_p^{*}(G)=1$.
\end{ex}

In the Introduction, we defined the equivariant compression by considering all $G$-equivariant coarse embeddings of a group $G$ into $L_p$-spaces. Note that when $f$ is a $G$-equivariant coarse embedding associated to $\alpha=(\pi,b)$, then the distance $\|b(g)-f(g)\|=\|b(1)-f(1)\|$, so $R(f)=R(b)$. One could thus equivalently define the equivariant compression as the supremum of $R(b)$ taken over all $1$-cocycles associated to linear isometric actions of $G$ on $L_p$-spaces.

Naor and  Peres computed in \cite[Remark 2.2]{naoper} the exact value and we record it in the next lemma for future use.
\begin{lemma}\label{lem:compF2}
Let $\F_2$ denote the non-abelian free group on two generators.
Then $\alpha_p^{*}(\F_2)=\max(1/2, 1/p)$ for $p\geq 1$.
\end{lemma}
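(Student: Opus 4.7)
The plan is to prove the two inequalities $\alpha_p^{*}(\F_2)\leq\max(1/2,1/p)$ and $\alpha_p^{*}(\F_2)\geq\max(1/2,1/p)$ separately.

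\textbf{Upper bound.} I would invoke the Naor--Peres theorem cited in the Introduction: any finitely generated group $G$ with $\alpha_p^{*}(G)>\max(1/2,1/p)$ must be amenable. Since $\F_2$ is not amenable, this immediately forces $\alpha_p^{*}(\F_2)\leq\max(1/2,1/p)$.

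\textbf{Lower bound.} I would exploit the free action of $\F_2$ on its Cayley tree $T$ (a $4$-regular tree) and build an explicit $1$-cocycle in two regimes.

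For $1\leq p\leq 2$: orient every edge of $T$ arbitrarily and consider the subspace $V_p$ of $\ell^p$ on the set of oriented edges consisting of antisymmetric functions ($f(\bar e)=-f(e)$), equipped with the natural isometric translation action $\pi$ of $\F_2$. For each pair of vertices $x,y\in T$, let $\eta_{x,y}\in V_p$ denote the signed $\pm 1$-valued indicator of the oriented geodesic from $x$ to $y$. Using the tree median property (for any $x,y,z\in T$ there is a unique vertex $m$ lying on all three pairwise geodesics), one verifies the additivity $\eta_{x,z}=\eta_{x,y}+\eta_{y,z}$: on the branch from $m$ to $y$ the contributions of $\eta_{x,y}$ and $\eta_{y,z}$ carry opposite orientations and cancel, while on the other two branches they combine into $\eta_{x,z}$. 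Setting $b(g):=\eta_{1,g}$ then produces a genuine $1$-cocycle with $\|b(g)\|_p^p=|g|$, so the associated equivariant embedding has compression $1/p=\max(1/2,1/p)$.

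For $p\geq 2$: start from the $p=2$ instance of the above cocycle, which satisfies $\|b(g)\|_2=|g|^{1/2}$, and post-compose with a Gaussian embedding $\ell^2\hookrightarrow L_p(\Omega)$. Fixing i.i.d.\ standard Gaussians $(X_e)_e$ indexed by the oriented edges, the map $f\mapsto\sum_e f(e)X_e$ is $c_p$ times an isometry (with $c_p$ the $L_p$-norm of a standard Gaussian) and is $\F_2$-equivariant once $\F_2$ acts on $L_p(\Omega)$ by permuting the $X_e$. The resulting cocycle $b'$ satisfies $\|b'(g)\|_{L_p}=c_p|g|^{1/2}$, giving compression $1/2=\max(1/2,1/p)$.

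\textbf{Main obstacle.} The only nontrivial check is the flow additivity $\eta_{x,z}=\eta_{x,y}+\eta_{y,z}$; the cancellation on the segment between $y$ and the median is essential and relies crucially on $T$ being a tree. The Gaussian embedding of a Hilbert space into $L_p$ and the black-box appeal to Naor--Peres are classical tools.
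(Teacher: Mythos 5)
Your argument is correct, but it is worth knowing that the paper does not prove this lemma at all: it simply records the value $\alpha_p^{*}(\F_2)=\max(1/2,1/p)$ as computed in Remark~2.2 of Naor--Peres and uses it as a black box. So your proposal genuinely differs by supplying a proof. For the upper bound you invoke the same Naor--Peres machinery the paper quotes in its introduction (equivariant compression above $\max(1/2,1/p)$ forces amenability), which is legitimate but still leans on a deep external theorem; there is no way around some such input, since the paper's own Corollary~4.6 only yields the lower bound $1/p$. Your lower bound is the standard and fully correct construction: the signed geodesic-indicator cocycle on the $4$-regular tree gives $\|b(g)\|_p^p\asymp |g|$ (the median/cancellation argument you flag is exactly the point where the tree structure enters, and it does go through; note only that with the antisymmetric convention each geometric edge is counted twice, an irrelevant constant), which settles $1\le p\le 2$; and the Gaussian embedding $\ell^2\hookrightarrow L_p$ is equivariant because the standard Gaussian measure is invariant under the orthogonal representation on $\ell^2$ of the edge set, so post-composition preserves the cocycle identity and yields $\|b'(g)\|_{L_p}=c_p|g|^{1/2}$ for $p\ge 2$. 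What your route buys is a self-contained, explicit witness for the lower bound in the same spirit as the paper's Proposition~3.3 (cocycles supported on cosets); what the paper's route buys is brevity, at the cost of outsourcing the entire computation.
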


\begin{prop}\label{prop:equicom_irred}
Let $\ga$ be a finite  graph and let 
$\mathfrak{G}=\{G_v\mid v\in V\Gamma\}$ be a family of finitely generated groups.
For each $v\in V\ga$, let $X_v$ be a finite generating set of $G_v$ and
set $X=\cup X_v$, a finite generating set of $G=\GG$.
 
Choose $p\geq 1$ and let $\rho\colon \R^+\rightarrow \R^+$ be a non-decreasing function such that $\rho^p$ is sub-additive, 
i.e.  that $\rho(x+y)^p\leq \rho(x)^p+\rho(y)^p$ for all  $x,y\geq 1.$ Assume moreover that there is a constant $C>0$ such that $\rho(x)\geq C$ for every $x\geq 1$.

If for every $v\in V\ga$ there is an affine isometric action 
$(\pi_v,b_v)$ on an $L_p$-space satisfying
$$ \forall g\in G_v: \ \|b_v(g)\| \geq \rho(|g|_{X_v}), $$
where $|\cdot |_{X_v}$ is the word length distance on $G_v$ with respect to $X_v$, then there is an affine isometric action of $G$ on an $L_p$-space with compression function $\rho$.
\end{prop}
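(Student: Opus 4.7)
The plan is to apply Proposition \ref{prop:main} directly to the given actions $(\pi_v, b_v)$, obtaining an affine isometric action $(\tau, \beta)$ of $G$ on an $L_p$-space satisfying the key identity
\[
\|\beta(g)\|^p = \sum_{i=1}^n \|b_{v_i}(g_i)\|^p
\]
for any reduced word $(g_1, \ldots, g_n)$ representing $g$. Combining this with the hypothesis $\|b_v(h)\| \geq \rho(|h|_{X_v})$ for every vertex group yields
\[
\|\beta(g)\|^p \;\geq\; \sum_{i=1}^n \rho(|g_i|_{X_{v_i}})^p,
\]
so the task reduces to showing that the right-hand side is at least $\rho(|g|_X)^p$.

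This follows from two elementary observations. First, for any reduced word $(g_1,\ldots,g_n)$ representing $g$, the syllable lengths sum to at least the word length of $g$ in $X$: namely, concatenating, for each $i$, a word over $X_{v_i}$ of length $|g_i|_{X_{v_i}}$ representing $g_i$ gives an $X$-word of total length $\sum_i |g_i|_{X_{v_i}}$ representing $g$, hence $|g|_X \leq \sum_i |g_i|_{X_{v_i}}$. Second, every syllable of a reduced word is nontrivial, so each $|g_i|_{X_{v_i}} \geq 1$; thus the sub-additivity assumption $\rho(x+y)^p \leq \rho(x)^p + \rho(y)^p$ for $x,y \geq 1$ can be iterated by induction on $n$ to give
\[
\rho\!\left(\sum_{i=1}^n |g_i|_{X_{v_i}}\right)^p \;\leq\; \sum_{i=1}^n \rho(|g_i|_{X_{v_i}})^p.
\]
Since $\rho$ is non-decreasing, $\rho(|g|_X)^p \leq \rho\bigl(\sum_i |g_i|_{X_{v_i}}\bigr)^p$, and chaining these inequalities together with the lower bound above for $\|\beta(g)\|^p$ yields $\|\beta(g)\| \geq \rho(|g|_X)$, as required.

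I expect no substantial obstacle: the argument is essentially a direct computation combining the orthogonality-style identity \eqref{eq:sumbi} supplied by Proposition \ref{prop:main} with the sub-additivity of $\rho^p$. The one bookkeeping point is to verify that the induction driving the iterated sub-additivity applies, which is where the restriction "$x,y\geq 1$" in the hypothesis meshes with the fact that all syllables in a reduced word are nontrivial. The uniform lower bound $\rho \geq C > 0$ on $[1,\infty)$ is not needed for the compression estimate itself, but it guarantees that the cocycle $\beta$ genuinely detects nontrivial elements; properness of the action (were one to want it, as in Theorem \ref{thm:main}) would not be an issue here since $\ga$ is finite.
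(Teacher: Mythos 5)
Your proposal is correct and follows essentially the same route as the paper: apply Proposition \ref{prop:main} to the given vertex actions, invoke the identity \eqref{eq:sumbi}, and then combine the hypothesis on the $b_v$ with the sub-additivity of $\rho^p$ and monotonicity of $\rho$ to bound $\|\beta(g)\|$ below by $\rho(|g|_X)$. Your extra care in noting that each syllable of a reduced word is nontrivial (so the restriction $x,y\geq 1$ in the sub-additivity hypothesis is satisfied, and $|g|_X\leq\sum_i|g_i|_{X_{v_i}}$) only makes explicit what the paper leaves implicit.
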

\begin{proof}

Let $g \in G$ and suppose that $W$ is a reduced word over $\cup_{v\in V} G_v$ representing $g$
$$ W \equiv (h_1,h_2,\ldots ,h_n),$$ and that $h_i$ is an element of the group $G_{v_i}$. Notice that  $|g|_X=\sum |h_i|_{X_i}$ where $h_i\in X_i$ and also for $g\in G_v$,  $\lvert g \rvert_X=\lvert g\rvert_{X_v}$.

Since $\|b_v(g)\|>C>0$ for all $g\in G_v, v\in V\ga$, 
the argument of Theorem \ref{thm:main} gives us that the affine isometric action 
$(\tau, \beta)$ of Proposition \ref{prop:main} of $G$ on a $L_p$-space  is proper.
Recall that the $1$-cocycle $\beta$ satisfies \eqref{eq:sumbi} and hence
\begin{eqnarray*}
\|\beta(g)\|& = & \sqrt[p]{\sum_{i=1}^m \|b_{v(i)}(h_i)\|^p} \\
& \geq & \sqrt[p]{\sum_{i=1}^m \rho( |h_i|_X)^p} \\
& \geq & \sqrt[p]{\rho(\sum_{i=1}^m |h_i|_X)^p} \ \mbox{ (subadditivity of $\rho^p$)}\\
& = & \rho(\sum_{i=1}^m |h_i|_X)=\rho(|g|_X),
\end{eqnarray*}
so $\rho$ is a compression function for $G$.
\end{proof}
We obtain the following immediate corollary.
\begin{cor}\label{cor:lowerbound}
Let $\ga$ be a finite graph and consider the graph product $G=\GG$ where $\mathfrak{G}=\{G_v\mid v\in V\Gamma\}$ is a collection of finitely generated groups. Then
$\alpha_p^{*}(G)\geq \alpha\coloneqq \min\{1/p,\alpha_p^{*}(G_v) \mid v\in V\}$.
\end{cor}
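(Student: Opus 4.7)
The plan is to reduce the claim to an application of Proposition \ref{prop:equicom_irred} with a pure power compression function $\rho(x) = Kx^r$. Fix any $r \in [0,\alpha)$; then $rp \le 1$ and $r < \alpha_p^*(G_v)$ for every $v \in V\ga$. By the definition of equivariant $L_p$-compression, each vertex group $G_v$ admits an affine isometric action $(\pi_v, b_v)$ on some $L_p$-space together with constants $C_v, D_v > 0$ satisfying
$$\|b_v(g)\| \ge \tfrac{1}{C_v}|g|_{X_v}^r - D_v \quad \text{for all } g \in G_v.$$

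Next I would upgrade these affine lower bounds into clean power-type lower bounds $\|b_v'(g)\| \ge K |g|_{X_v}^r$. For $|g|_{X_v}$ beyond a threshold depending on $C_v, D_v$, the defect $D_v$ is already absorbed and $\|b_v(g)\| \ge \tfrac{1}{2C_v}|g|_{X_v}^r$. For the remaining small values of $|g|_{X_v}$, I would apply Lemma \ref{lem:trick} to each $(\pi_v, b_v)$ with a sufficiently large constant to produce a modified action $(\pi_v', b_v')$ on an $L_p$-space with $\|b_v'(g)\|^p \ge \|b_v(g)\|^p + 2M_v^p$ for all $g \neq 1$. Since $V\ga$ is finite, a single constant $K > 0$ can be chosen so that $\|b_v'(g)\| \ge K|g|_{X_v}^r$ holds uniformly for every vertex $v$ and every $g \in G_v$.

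Having this, set $\rho(x) = Kx^r$. The hypotheses of Proposition \ref{prop:equicom_irred} are then all met: $\rho$ is non-decreasing, $\rho(x) \ge K$ for $x \ge 1$, and $\rho^p(x) = K^p x^{rp}$ is subadditive because $rp \le 1$ (this is precisely where the $1/p$ enters the minimum defining $\alpha$). The proposition produces an affine isometric action $(\tau, \beta)$ of $G$ on an $L_p$-space whose cocycle satisfies $\|\beta(g)\| \ge K|g|_X^r$ for every $g \in G$. The matching Lipschitz upper bound $\|\beta(g)\| \le |g|_X \cdot \max_{x \in X} \|\beta(x)\|$ is automatic from the cocycle relation together with finiteness of $X$. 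Therefore $R(\beta) \ge r$, and since $r < \alpha$ was arbitrary, one concludes $\alpha_p^*(G) \ge \alpha$.

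The only real piece of work is the clean-up in the middle paragraph: the cocycles $b_v$ must lose their additive defects $D_v$ and end up dominated by a single power function simultaneously for every vertex. This works precisely because $V\ga$ is finite (so finitely many constants $C_v, D_v$ must be handled), and this is the only place the finite-graph assumption enters. Everything else is packaged inside Proposition \ref{prop:equicom_irred}.
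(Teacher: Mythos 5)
Your proof is correct and follows essentially the same route as the paper: both reduce to Proposition \ref{prop:equicom_irred} with a power-type compression function of exponent just below $\alpha$, after normalizing the vertex cocycles via Lemma \ref{lem:trick} so that they are bounded away from zero and dominated by a single clean power function. The only difference is that the paper outsources the removal of the additive defects $D_v$ to Lemma 2.1 of \cite{Dreesen}, whereas you carry out that clean-up explicitly by splitting into large and small word length; the content is the same.
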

\begin{proof}
The case $\alpha=0$ is trivial, so we may assume there is $\epsilon>0$ with $\alpha>\epsilon>0$.
Assume that each $\rho_v$ is a compression function of some $1$-cocycle $b_v$ associated to a linear isometric action $\pi_v$ of $G_v$ on an $L_p$-space. By Lemma \ref{lem:trick}, we can assume that there is a constant $C>0$ such that $\|b_v(g)\|\geq C>0$ for every $v\in V, g\in G_v$. More precisely, we can assume that $\rho_v:r\mapsto \frac{1}{C_v} r^{\alpha-\epsilon}$ (see Lemma 2.1 in \cite{Dreesen} where the role of $b_v$ is played by $\tilde{f}$). Let $\rho:\R\rightarrow \R, r\mapsto \min(\rho_v(r)\mid v\in V)$. As $\alpha\leq 1/p$, $\rho^p$ is sub-additive. So, by Proposition \ref{prop:equicom_irred}, we conclude that $\rho$ is a compression function associated to a proper affine isometric action of $G$ on an $L_p$-space. By definition of compression, we can let $\epsilon$ go to $0$ and conclude that $\alpha^{*}_p(G)\geq \alpha=\min\{1/p,\alpha_p^{*}(G_v) \mid v\in V\}$ as desired.
\end{proof}

Given groups $G_1,G_2$  with equivariant Hilbert space compression 
$\alpha^{*}_2(G_i)=\alpha_i,$  $i=1,2$, it follows from \cite{Dreesen} 
that the equivariant Hilbert space compression  $\alpha_2^*(G_1\ast G_2)$ is equal to
$\min(\alpha_1,\alpha_2,1/2)$ unless $G_1$ and $G_2$ are both cyclic of order $2$ 
(in which case the equivariant compression is $1$, as we have seen in Example \ref{ex:diedral}). The fact that $\alpha_2^*(G_1\ast G_2) \geq \min(\alpha_1,\alpha_2,1/2)$ also follows from our Corollary \ref{cor:lowerbound}. Moreover as $G_1,G_2<G$, it is clear that $\alpha^*_2(G)\leq \min(\alpha_1, \alpha_2)$. In order to prove that $1/2$ is also an upper bound, the author uses that, when $G_1$ and $G_2$ are not both cyclic of order $2$, then $G_1*G_2$ contains a quasi-isometrically embedded copy of the non-abelian free subgroup $\F_2$ (this follows from Britton's lemma and basic Bass-Serre tree, see the beginning of the proof of Theorem 4.9 in \cite{Dreesen} for more info). Hence, $\alpha^*_2(G)\leq \alpha^*_2(\F_2)=1/2$. In the case of graph products, one can easily find quasi-isometrically embedded free groups if one restricts to irreducible graphs. Let us introduce this concept.

\begin{df}
A simplicial graph $\ga$ is {\it reducible}, if there exists a partition of $V\ga$ into two 
non-empty subsets $A$ and $B$ such that $A\subseteq \link_\ga(B).$ That is, $V\ga=A\cup B$,
$A\cap B =\emptyset$, $A\neq \emptyset \neq B$ and for every $u\in A$, $v\in B$, $\{u,v\}\in E\ga$.

If the graph is not reducible then it is called {\it irreducible}.
\end{df}

Note that when $\ga$ is reducible, say $V\Gamma=A\cup B$, then $G=G_A\times G_B$ and hence the equivariant $L_p$-compression of $G$ equals $\min(\alpha^{*}_p(G_A),\alpha^{*}_p(G_B))$ (see \cite{guekam}). In order to calculate the equivariant $L_p$-compression of $G$, we may thus assume, without loss of generality, that $\ga$ is irreducible. Also, if  some of the vertex groups are trivial, we can consider the subgraph $\ga'$ of $\ga$ that omit them and obtain an isomorphic group.  In order to simplify the exposition, but without loss of generality, we will henceforth assume that all vertex groups are non-trivial.

\begin{lemma}
Let $\ga$ be a finite irreducible graph and consider the graph product $G=\GG$ where $\mathfrak{G}=\{G_v\mid v\in V\Gamma\}$ is a collection of non-trivial groups. If $\lvert V\Gamma \rvert \geq 2$, then $G$ contains a quasi-isometrically embedded non-abelian free subgroup unless $G=\Z_2*\Z_2$.
\label{lemma:freesubgroup}
\end{lemma}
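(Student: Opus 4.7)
The plan is to exhibit an isometrically embedded special subgroup $G_A\leq G$, with $|A|\in\{2,3\}$, isomorphic to a non-trivial free product that is not $\Z_2*\Z_2$, and then invoke the standard ping-pong / Bass--Serre argument (as recalled at the start of the proof of Theorem 4.9 in \cite{Dreesen}) to extract a quasi-isometrically embedded copy of $\F_2$ inside it. The preliminary observation is that for any $A\subseteq V\ga$ the canonical retraction $\rho_A\colon G\to G_A$ is $1$-Lipschitz with respect to the natural generating sets (it sends every generator outside $X_A=\bigsqcup_{v\in A}X_v$ to $1$); combined with the obvious inequality $|g|_X\leq |g|_{X_A}$ for $g\in G_A$, this gives $|g|_X=|g|_{X_A}$, so every special subgroup $G_A$ is isometrically embedded in $G$.

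Next I would note the graph-theoretic content of irreducibility: the reducibility condition on $\ga$ is exactly a non-trivial partition of $V\ga$ with no edge of $\ga^c$ between the two parts, so $\ga$ is irreducible if and only if $\ga^c$ is connected; in particular, every vertex of $\ga$ has at least one non-neighbor. Pick any two non-adjacent vertices $u,v\in V\ga$. If $G_u*G_v\not\cong \Z_2*\Z_2$, the free-product fact above, applied to $G_u*G_v=G_{\{u,v\}}$, directly produces the desired quasi-isometrically embedded $\F_2\leq G$.

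The remaining case is when every non-adjacent pair of vertices has both vertex groups equal to $\Z_2$; since every vertex has a non-neighbor, all vertex groups must then be $\Z_2$, so $G$ is a right-angled Coxeter group. If $|V\ga|=2$, irreducibility forces $\ga$ to be edgeless and $G=\Z_2*\Z_2$, the excluded case. Otherwise $|V\ga|\geq 3$ and the connected graph $\ga^c$ has a vertex $w$ of degree at least two; choosing two non-neighbors $u,v$ of $w$ in $\ga$, the isometrically embedded subgroup $G_{\{u,v,w\}}$ is isomorphic to $\Z_2*\Z_2*\Z_2$ if $\{u,v\}\notin E\ga$ and to $(\Z_2\times\Z_2)*\Z_2$ if $\{u,v\}\in E\ga$. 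Both are non-trivial free products of finite groups, neither is $\Z_2*\Z_2$, and the standard free-product argument again yields a quasi-isometrically embedded $\F_2$ inside $G_{\{u,v,w\}}\leq G$.

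The main obstacle is the case $G_u*G_v\cong\Z_2*\Z_2$, in which a two-vertex subgroup fails to give a non-abelian free subgroup; irreducibility must then be exploited to bring in a third vertex whose pattern of non-adjacencies enlarges the special subgroup to a virtually free group containing $\F_2$. Once that is done, the composition of the isometric inclusion $G_{\{u,v,w\}}\hookrightarrow G$ with the QI-embedding of $\F_2$ into $G_{\{u,v,w\}}$ completes the proof.
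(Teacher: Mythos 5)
Your proof is correct, and it reaches the same endpoint as the paper (a two- or three-vertex special subgroup that is a non-trivial free product other than $\Z_2*\Z_2$, into which the standard Bass--Serre argument from \cite{Dreesen} puts a quasi-isometrically embedded $\F_2$), but the route through the case analysis is genuinely different. The paper never mentions the complement graph: it fixes a vertex $v$, treats $\link_\ga(v)=\emptyset$ separately, and otherwise argues that if every $u\in V\ga\setminus\link_\ga(v)$ satisfied $\link_\ga(v)\subseteq\link_\ga(u)$ then $G$ would split as $G_{\link_\ga(v)}\times G_{V\ga\setminus\link_\ga(v)}$, contradicting irreducibility; this yields a triple $\{u,v,w\}$ with $G_{\{u,v,w\}}\cong (G_v\times G_w)*G_u$, and the factor $G_v\times G_w$ automatically has more than two elements, so no further discussion of vertex-group sizes is needed. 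You instead translate irreducibility into connectivity of $\ga^c$, which lets you grab a non-adjacent pair immediately and dispose of all cases where some vertex group is not $\Z/2\Z$ using only a two-vertex subgroup; the three-vertex subgroups ($\Z_2*\Z_2*\Z_2$ or $(\Z_2\times\Z_2)*\Z_2$) appear only in the right-angled Coxeter case, where a degree-$\geq 2$ vertex of the connected graph $\ga^c$ exists because $|V\ga|\geq 3$. Your version has the added merit of explicitly justifying, via the $1$-Lipschitz retraction $\rho_A$, why special subgroups are isometrically embedded --- a fact the paper uses without comment --- and the reformulation ``irreducible $\iff$ $\ga^c$ connected'' is a reusable observation; the paper's version is slightly more economical in that a single trichotomy covers all vertex groups at once.
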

\begin{proof}
Suppose first that $|V\ga|=2$. Then, since $\ga$ is irreducible, there is no edge joining the vertices and $G$ is a free product of the vertex groups. If one of both vertex groups is not cyclic of order $2$, then $G$ contains a copy of $\F_2$. 

Suppose now that  $\lvert V\Gamma \rvert \geq 3$. Let  $v\in V\ga$. 
If $\link_\ga(v)$ is empty, then $G= G_v*G_{V\ga \backslash \{v\}}$ and by hypothesis none of the factors is trivial, and $G_{V\ga\backslash \{v\}}$ is not cyclic of order two. Hence, it contains a non-abelian free subgroup.

Assume next that $\link_\ga(v)$ is non-empty. If for all $u\in V\ga\backslash \link_\ga(v)$, $\link_\ga(v)\subseteq \link_\ga(u)$, then $G=G_{\link_\ga(v)}\times G_{V\ga \backslash \link_\ga(v)}$ contradicting that $\ga$ is irreducible. Thus,  there is $u\in V\ga \backslash \link_\ga(v)$ and $w\in \link_\ga(v)$ such that $w\notin \link_\ga(u)$. Hence $G_{\{u,v,w\}}=(G_v\times G_w)*G_u$ is quasi-isometrically embedded in $G$. As $G_v\times G_w$ has more than two elements, we have that $G$ contains a quasi-isometric copy of the non-abelian free group $\F_2$.
\end{proof}
We summarize the results of this section in the next theorem which deals with graph products over any finite graph.
\begin{thm}\label{thm:maineqcom}
Let $p\geq 1$,  $\ga$ be a finite simplicial graph and $\G=\{G_v\mid v\in V\}$ a collection of non-trivial, finitely generated groups. Let $A_1,\dots, A_n\subseteq V\ga$, such that $\ga_{A_i}$ is a non-empty irreducible subgraph of $\ga$ and $G=G_{A_1}\times \dots \times G_{A_n}$.
Then $\alpha_p^{*}(G)=\min \{\alpha_p^{*}(G_{A_i})\mid i=1,\dots,n\}$ where
\begin{itemize}
\item[(i)] $\alpha_p^{*}(G_{A_i})=\alpha_p^{*}(G_v)$ if $A_i=\{v\}$, 
\item[(ii)] $\alpha_p^{*}(G_A)=1$ if $G_{A_i}$ is infinite dihedral and,
\item[(iii)] in all other cases we have that  $$\min(1/p, \alpha_i) \leq \alpha_p^{*}(G_{A_i}) \leq \min (\alpha_i, \max(1/2,1/p))$$ where $\alpha_i=\min\{\alpha_p^{*}(G_v) \mid v\in A_i\}$. 
\end{itemize}
\end{thm}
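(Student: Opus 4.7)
The plan is to reduce the theorem to the irreducible-factor case and then handle (i), (ii), (iii) separately, relying almost entirely on machinery already in the excerpt.

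First I would invoke the behaviour of equivariant $L_p$-compression under direct products, namely $\alpha_p^*(H_1 \times H_2) = \min(\alpha_p^*(H_1), \alpha_p^*(H_2))$, attributed to \cite{guekam} in the paragraph preceding Lemma \ref{lemma:freesubgroup}. Iterating this on $G = G_{A_1} \times \cdots \times G_{A_n}$ yields the main equality $\alpha_p^*(G) = \min_i \alpha_p^*(G_{A_i})$, reducing the problem to bounding $\alpha_p^*(G_{A_i})$ for each irreducible factor. Case (i) is then tautological, since $A_i = \{v\}$ gives $G_{A_i} = G_v$, and case (ii) is exactly Example \ref{ex:diedral}, which computes $\alpha_p^*(\Z_2 * \Z_2) = 1$. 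The lower bound in case (iii) is immediate from Corollary \ref{cor:lowerbound} applied to $G_{A_i}$ viewed as a graph product over the finite graph $\ga_{A_i}$, giving $\alpha_p^*(G_{A_i}) \geq \min(1/p, \alpha_i)$.

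The actual content lies in the two upper bounds of case (iii). For $\alpha_p^*(G_{A_i}) \leq \alpha_i$ I would invoke the standard monotonicity of equivariant $L_p$-compression under quasi-isometrically embedded subgroups: each vertex group $G_v$, $v \in A_i$, sits isometrically inside $G_{A_i}$ with respect to word metrics (the normal form theorem forces word-metric distances computed inside $G_v$ to agree with those computed in $G_{A_i}$), so restricting any $1$-cocycle for $G_{A_i}$ to $G_v$ produces one for $G_v$ with an at-least-as-good compression exponent; hence $\alpha_p^*(G_{A_i}) \leq \alpha_p^*(G_v)$ for every $v \in A_i$. For $\alpha_p^*(G_{A_i}) \leq \max(1/2, 1/p)$, Lemma \ref{lemma:freesubgroup} applies directly: because $\ga_{A_i}$ is irreducible with $|A_i| \geq 2$ and $G_{A_i}$ is not infinite dihedral, $G_{A_i}$ contains a quasi-isometrically embedded copy of $\F_2$, and the same monotonicity together with Lemma \ref{lem:compF2} yields $\alpha_p^*(G_{A_i}) \leq \alpha_p^*(\F_2) = \max(1/2, 1/p)$.

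There is essentially no obstacle: every ingredient---the direct-product formula, the lower bound, the embedded $\F_2$, and the exact compression of $\F_2$---is already in place, and the proof reduces to combining them. The only fact not stated verbatim in the excerpt is the monotonicity $\alpha_p^*(H) \leq \alpha_p^*(K)$ whenever $K \leq H$ is a quasi-isometrically embedded subgroup, which follows immediately by restricting an ambient affine isometric action and rescaling the compression function to absorb the quasi-isometry constants.
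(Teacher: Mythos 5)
Your proposal is correct and follows essentially the same route as the paper's own proof: the direct-product formula from \cite{guekam} for the main equality, Corollary \ref{cor:lowerbound} for the lower bound in (iii), and Lemma \ref{lemma:freesubgroup} combined with Lemma \ref{lem:compF2} and monotonicity of $\alpha_p^*$ under quasi-isometrically embedded subgroups for the upper bound. The monotonicity fact you flag as the one unstated ingredient is in fact invoked explicitly (modulo a typo) in the paper's proof, so nothing is missing.
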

\begin{proof}
The formula for $\alpha_p^{*}(G)$ follows from the fact that the direct sum of proper affine isometric actions of the $G_{A_i}$'s gives a proper affine isometric action of $G_{A_1}\times\dots\times G_{A_n}$ \cite{guekam}. The formulas for (i) and (ii) are clear. The formula for (iii)  follows from Corollary \ref{cor:lowerbound} and from Lemma \ref{lemma:freesubgroup} in combination with  $\alpha^*_p(\F_2)=\max(1/2,1/p)$ as we recorded in Lemma \ref{lem:compF2} and the fact that for any quasi-isometrically embedded subgroup $H$ of a group $K$,  $\alpha_p^{*}(H)\geq \alpha_p^{*}(G)$.
\end{proof}
\begin{rem}
Note that for $1\leq p \leq 2$, the inequalities in (iii) in the above result become equalities. In particular, this applies to the much studied case $p=2$, i.e. we have an explicit formula for the equivariant $L_2$-compression of the graph product in terms of the equivariant $L_2$-compressions of the vertex groups.
\end{rem}
\begin{cor} \label{cor:eqsimplecompression}
Let $G=\GG$ be a graph product containing a quasi-isometrically embedded non-abelian free group. The equivariant $L_p$-compression of $G$ satisfies   
$$\min_{v\in V\ga}\left(\frac{1}{p}, \alpha_p^*(G_v)  \right)\leq \alpha^*_p(G)\leq \min_{v\in V\ga}\left(\max \left(\frac{1}{2},\frac{1}{p}\right), \alpha_p^*(G_v)\right).$$
\end{cor}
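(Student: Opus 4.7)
The plan is to obtain the corollary as an immediate specialization of the preceding results of the section: Corollary \ref{cor:lowerbound} supplies the left inequality directly, while the right inequality follows from two monotonicity statements for equivariant compression under quasi-isometric embedding of subgroups.

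For the lower bound, I would simply invoke Corollary \ref{cor:lowerbound}, which asserts $\alpha_p^*(G) \geq \min\{1/p, \alpha_p^*(G_v) \mid v \in V\Gamma\}$ for any graph product of finitely generated groups over a finite graph. Rewriting this minimum as $\min_{v \in V\Gamma}(1/p, \alpha_p^*(G_v))$ yields the required left-hand inequality with no further work.

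For the upper bound, I would use the general fact that whenever $H$ is a quasi-isometrically embedded subgroup of a finitely generated group $K$, restricting a $1$-cocycle on $K$ to $H$ is still a $1$-cocycle (for the restricted action) and scales the compression appropriately, so $\alpha_p^*(H) \geq \alpha_p^*(K)$. Two instances are relevant here. First, the canonical retraction $\rho_{\{v\}} \colon G \to G_v$ of Section \ref{s:graphproduct} is $1$-Lipschitz and restricts to the identity on $G_v$, so $G_v$ embeds isometrically into $G$, and hence $\alpha_p^*(G) \leq \alpha_p^*(G_v)$ for every vertex $v$. Second, the standing hypothesis supplies a quasi-isometrically embedded copy of $\F_2$ inside $G$, which combined with Lemma \ref{lem:compF2} gives $\alpha_p^*(G) \leq \alpha_p^*(\F_2) = \max(1/2, 1/p)$. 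Taking the minimum of these two upper bounds for each $v$ produces $\min_{v \in V\Gamma}(\max(1/2, 1/p), \alpha_p^*(G_v))$, which is exactly the right-hand side.

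No serious obstacle is expected, since every ingredient is already in place. The only minor point worth verifying is the (iso)metric embedding of each $G_v$ into $G$, which is immediate from the existence of $\rho_{\{v\}}$ together with the observation that the word-length on $G_v$ with respect to $X_v$ agrees with the restriction of the word-length on $G$ with respect to $X = \bigcup_v X_v$. Once that is noted, the corollary is really just a bookkeeping combination of Corollary \ref{cor:lowerbound}, Lemma \ref{lem:compF2}, and the subgroup monotonicity of $\alpha_p^*$.
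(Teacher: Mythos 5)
Your proposal is correct and uses exactly the ingredients the paper relies on for this statement: Corollary \ref{cor:lowerbound} for the lower bound, and for the upper bound the monotonicity of $\alpha_p^*$ under quasi-isometric embedding of subgroups applied to each $G_v$ (isometrically embedded via the retraction $\rho_{\{v\}}$) and to the hypothesized copy of $\F_2$ together with Lemma \ref{lem:compF2}. The only cosmetic difference is that you argue directly rather than routing through the irreducible-factor decomposition of Theorem \ref{thm:maineqcom}, which changes nothing of substance.
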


\section{The behaviour of non-equivariant compression under graph products}
\label{s:nequicom}

So far, we have quantified how fast $1$-cocycles associated to linear isometric actions on $L_p$-spaces go to infinity. Instead of only considering $1$-cocycles, we now consider arbitrary Lipschitz maps of the group into any $L_p$-space ($p\geq 1$). We recall the following definitions from the Introduction.

\begin{df}
Let $(X,\mu)$ be a measure space. The compression of a coarse embedding $f\colon (G,d) \rightarrow L_p(X,\mu)$ is 
the supremum of real numbers $\alpha \in [0,1]$ such that there exists a constant $C>0$ with
$$ \frac{1}{C} d(x,y)^\alpha \leq \|f(x)-f(y)\| ,$$
for all $x,y\in G$. The {\it $L_p$-compression} $\alpha_p(G)$ of $G$ is the supremum of the compressions of all coarse embeddings of $G$ in all possible $L_p$-spaces.
\end{df}

Notice that $\alpha_p$ is invariant under quasi-isometry.

Before proving the main theorem of this section, we need some more facts
about graph products. 
\begin{lemma}\cite[Lemma 3.20]{Green} \label{lem:A*_CB}
Let  $\ga=(V,E)$ is a simplicial graph and $\mathfrak{G}$ a collection of groups indexed by $V$.
For any $v \in V$ the group $G=\GG$ naturally splits as a free amalgamated product: $G=G_A*_{G_C}G_B$, where $C=\link_\ga(v)$, $B=\{v\}\cup\link_\ga(v)$ and $A=V\backslash \{v\}$.
\end{lemma}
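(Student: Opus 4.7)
The plan is to verify the amalgamated splitting by comparing group presentations. From Definition \ref{def:gp}, $G = \GG$ has a presentation with generating set $\bigsqcup_{u\in V\ga} G_u$ subject to the internal relations of each $G_u$ together with $[g_u,g_w]=1$ for all $\{u,w\}\in E\ga$, $g_u\in G_u$, $g_w\in G_w$. My aim is to show the amalgamated free product $G_A *_{G_C} G_B$ admits the same presentation via the obvious identification of generators, so that the identity on $\bigsqcup_u G_u$ extends to mutually inverse isomorphisms between the two groups.

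First I would observe that $V\ga = A\cup B$ with $A\cap B = C = \link_\ga(v)$, so the generating sets of $G_A$ and $G_B$ jointly cover that of $G$, and $G_C$ sits inside both $G_A$ and $G_B$ as the special subgroup on $C$, which is exactly the amalgamation data. Since every vertex of $C$ is adjacent to $v$ by definition, $G_B$ is in fact isomorphic to the direct product $G_v \times G_C$, a convenient description though not strictly needed for the argument.

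The key step is matching the defining relations of $G_A *_{G_C} G_B$ with those of $G$. The internal relations of each $G_u$ appear in $G_A$ or $G_B$ since $V\ga = A\cup B$. For a commutator relation indexed by an edge $\{u,w\}\in E\ga$: if $v\notin\{u,w\}$ then both endpoints lie in $A$, so the relation is present in $G_A$; if $v\in\{u,w\}$, say $u=v$, then $w\in\link_\ga(v)\subseteq B$ and $v\in B$, so $\{u,w\}$ is an edge of $\ga_B$ and the relation is present in $G_B$. Conversely, neither $\ga_A$ nor $\ga_B$ introduces an edge outside $\ga$, so no spurious relations arise. This establishes that the two natural homomorphisms between $G$ and $G_A *_{G_C} G_B$ extending the identity on generators are mutually inverse.

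To interpret $G_A$, $G_B$, $G_C$ as honest subgroups of $G$ rather than merely as abstract groups mapping in, one invokes Green's normal form theorem \cite[Thm. 3.9]{Green}, which ensures that each special subgroup embeds via its canonical inclusion. I do not foresee any serious obstacle here; the only delicate point is the bookkeeping of which edges of $\ga$ produce commutator relations on which side of the amalgam, and this is manageable because $B$ is exactly the closed star $\st_\ga(v)$, absorbing every edge of $\ga$ incident to $v$.
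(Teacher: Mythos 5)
Your argument is correct. Note that the paper does not prove this lemma at all --- it is imported directly from Green's thesis \cite[Lemma 3.20]{Green} --- so there is no internal proof to compare against; your presentation-matching argument (checking that every defining relation of $\GG$ lands in $G_A$ or $G_B$ because $B=\st_\ga(v)$ absorbs every edge incident to $v$, and that the full subgraphs $\ga_A$, $\ga_B$ introduce no extra relations) is the standard proof and is essentially Green's. The one point that needs care, which you do address, is that the notation $G_A*_{G_C}G_B$ presupposes that $G_C$ embeds in both factors; besides the normal form theorem, this also follows immediately from the canonical retractions $\rho_C$ introduced in Section \ref{s:graphproduct}.
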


The next lemma is well-known and uses standard Bass-Serre theory, (see \cite{Dicks-Dunwoody} or \cite{Serre}). 
 We give a proof for completeness.
\begin{lemma}
\label{lemma:splitsequence}
For each $u\in V\ga$ let $X_u$ be a generating set for $G_u$. Let $X=\cup_{u\in V\ga} X_u$.
For each $v\in V\ga$, let $A_v=V\ga\backslash \{v\}$, $C_v=\link_\ga(v)$ and $T_v$ be a transversal for the right multiplication action of $G_{C_v}$ on $G_{A_v}$.

\begin{enumerate}
\item[(1)]
 The following sequence
$$1\to \ast_{g\in T_v} gG_vg^{-1} \to G \stackrel{\rho_{A_v}}{\longrightarrow} G_{A_v} \to 1 $$
is exact and splits.

\item[(2)] 
For each $v\in V\ga$, let $\phi_v\colon G\to W_v\rtimes G_{A_v}$ be the natural isomorphism corresponding to the split extension, where $W_v=\ker \rho_{A_v}$. For $z\in G$ we write $\phi_v(z)=(w_z^v, g_z^v)$.
Let $l_{W_v}$ denote the word length in $W_v$ with respect to $\cup_{g\in T_v}gX_vg^{-1}$.
Then  \begin{equation}
\label{eq:rewritingsemi}
\sum_{u\in V\ga} l_{W_u}(w_z^u)= l_X(z), \, \forall z\in G.
\end{equation}
\end{enumerate}
\end{lemma}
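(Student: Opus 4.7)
The plan for (1) is to apply Bass--Serre theory to the amalgamated product decomposition $G=G_{A_v}\ast_{G_{C_v}}G_{\st(v)}$ provided by Lemma~\ref{lem:A*_CB}, using that $G_{\st(v)}=G_v\times G_{C_v}$. I would first observe that $\rho_{A_v}$, defined as the identity on $G_{A_v}$ and as the projection $G_v\times G_{C_v}\to G_{C_v}\hookrightarrow G_{A_v}$ on $G_{\st(v)}$, agrees on $G_{C_v}$ and so descends to a well-defined homomorphism out of the amalgam; the inclusion $G_{A_v}\hookrightarrow G$ is then a section, splitting the short exact sequence. To identify the kernel $W_v$, I let it act on the Bass--Serre tree. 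Since $\rho_{A_v}$ is the identity on both $G_{A_v}$ and $G_{C_v}$, every edge stabilizer and every $G_{A_v}$-type vertex stabilizer in $W_v$ is trivial, while the $W_v$-stabilizer of the $G_{\st(v)}$-type vertex $gG_{\st(v)}$ is precisely $gG_vg^{-1}$. An orbit count using the transversal $T_v$ then shows that the quotient graph of groups is a star with one central trivial-group vertex and $|T_v|$ leaves labelled by copies of $G_v$, whose fundamental group is the desired free product $\ast_{g\in T_v}gG_vg^{-1}$.

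For (2), I fix a minimal $X$-expression $z=s_1\cdots s_N$ with $s_i\in X_{u_i}$ and set $I(v)=\{i:u_i=v\}$. An iteration of the semidirect-product multiplication in $W_v\rtimes G_{A_v}$ yields
\[ w_z^v=\prod_{i\in I(v)}\alpha_i\,s_i\,\alpha_i^{-1},\qquad \alpha_i:=\rho_{A_v}(s_1\cdots s_{i-1}). \]
Decomposing $\alpha_i=g_ic_i$ with $g_i\in T_v$ and $c_i\in G_{C_v}$, and invoking the defining graph-product relation $[s_i,c_i]=1$ (since $s_i\in G_v$ and $c_i\in G_{\link(v)}$), this simplifies to $w_z^v=\prod_{i\in I(v)}g_is_ig_i^{-1}$, a product of $|I(v)|$ generators of $W_v$. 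Summing over $v$ gives the easy direction $\sum_v l_{W_v}(w_z^v)\leq\sum_v|I(v)|=N=l_X(z)$.

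The matching lower bound is the real content, and the main obstacle. I group consecutive indices in $I(v)$ with equal $g_i$ into maximal runs of lengths $l_1,\dots,l_k$; since distinct runs lie in distinct factors of the free product $W_v=\ast_{g\in T_v}gG_vg^{-1}$, the expression rewrites in free-product normal form as $w_z^v=\eta_1\cdots\eta_k$ with $\eta_j=h_j\sigma_jh_j^{-1}$, where $h_j\in T_v$ and $\sigma_j\in G_v$ is the product of the $X_v$-letters in the $j$-th run. Thus $l_{W_v}(w_z^v)=\sum_j l_{X_v}(\sigma_j)$, and it suffices to prove $l_{X_v}(\sigma_j)=l_j$ for each $j$. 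The delicate point is that within a run the intermediate $s_k$'s between two consecutive run-indices multiply to an element of $G_{C_v}=G_{\link(v)}$, which commutes with each $X_v$-letter of the run by $[G_v,G_{\link(v)}]=1$, even though no individual intermediate letter need lie in $G_{\link(v)}$. Iterated such block-commutations bring the $l_j$ run-letters adjacent in an $X$-expression of $z$ of unchanged length $N$; if $l_{X_v}(\sigma_j)<l_j$, replacing that adjacent block by a shorter $X_v$-expression of $\sigma_j$ produces an $X$-expression of $z$ of length less than $N$, contradicting minimality. This is the step where the graph-product normal form enters crucially, and it is precisely the interaction of commutation with minimality that has to be managed carefully.
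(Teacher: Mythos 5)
Your proof is correct. For part (1) you follow essentially the same route as the paper: Bass--Serre theory applied to the splitting $G=G_{A_v}\ast_{G_{C_v}}G_{\st(v)}$ of Lemma \ref{lem:A*_CB}, identification of the kernel's intersections with vertex and edge stabilizers via $G_{\st(v)}=G_v\times G_{C_v}$ and the injectivity of $\rho_{A_v}$ on $G_{A_v}$ and $G_{C_v}$, and an orbit count showing that the quotient graph of groups is a star with trivial central vertex group and leaves carrying the conjugates $gG_vg^{-1}$, $g\in T_v$. For part (2) your argument is genuinely different in execution. The paper starts from a \emph{reduced} graph-product word for $z$, reads off the free-product normal form of $w_z^v$ directly (reducedness guarantees that consecutive $G_v$-syllables are conjugated by representatives of distinct cosets of $G_{C_v}$), and then invokes the additivity $l_X(z)=\sum_i |h_i|_{X_{v_i}}$ over syllables, which was recorded earlier in the paper. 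You instead start from a minimal $X$-letter word, get the inequality $\sum_u l_{W_u}(w_z^u)\leq l_X(z)$ for free from the expansion $w_z^v=\prod_{i\in I(v)}g_is_ig_i^{-1}$, and prove the reverse inequality by commuting each run's intermediate letters (whose product lies in $G_{C_v}$, hence commutes with $G_v$) out of the way and appealing to minimality. This is more self-contained --- it effectively reproves the syllable-length additivity that the paper takes as known --- at the cost of being longer; both approaches are sound. One presentational point: you assert the free-product normal form $w_z^v=\eta_1\cdots\eta_k$ (which requires every $\sigma_j\neq 1$) before you have ruled out $\sigma_j=1$; since your minimality argument establishes $l_{X_v}(\sigma_j)=l_j\geq 1$ without using the normal form, simply run that argument first and then conclude $l_{W_v}(w_z^v)=\sum_j l_j=|I(v)|$.
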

For the sake of simplicity, when we index free products or sets with $g\in G_A/G_C$, we mean that $g$ runs through some transversal for $G_A/G_C$.  
\begin{proof}
(1). For simplicity we just write $A$ and $C$ instead of $A_v$ and $C_v$.
Let $B= \{v\}\cup C\subset V\ga$. By Lemma \ref{lem:A*_CB},  $G=G_A*_{G_C}G_B$. 
Let $X$ be the graph with vertices $\{{\bf p},{\bf q}\}$ 
and edge ${\bf e}$ connecting ${\bf p}$ to ${\bf q}$.
We form a graph of groups $(X,G(-))$ by setting $G( {\bf p} )=G_A$, 
$G({\bf q} )=G_B$ and $G({\bf e} )=G_C$.
The fundamental group of this graph of groups is $G$  and 
let $T$ be the corresponding Bass-Serre tree. 
In particular, $T$ has vertices  $\{g{\bf p}\}_{g\in G/G_A}\sqcup \{g{\bf q}\}_{g\in G/G_B}$ 
and  edges $\{g {\bf e}\}_{g\in G/G_C}$. 
The edge $g{\bf e}$ connects $h{\bf p}$ and $k{\bf q}$ 
if $gG_C\subseteq hG_A$ and $gG_C\subseteq kG_B$.

Let $K$ denote the kernel of $\rho_A$. 
Notice that the stabiliser of vertex of $T$ of the form $g{\bf p}$ is $gG_Ag^{-1}$.
Since $\rho_A|_{gG_Ag^{-1}}$ is injective,  
$K$ acts freely on the vertices $\{g {\bf p}\}_{g\in G/G_A}$ and 
on the edges $\{g {\bf e}\}_{g\in G/G_C}$ of $T$.
For the vertices of the form $g{\bf q}$, the stabiliser is $gG_Bg^{-1}$.  
Since $G_B=G_v\times G_C$  and $\rho_A$ is injective on $G_C$, 
it follows that $G_B\cap K=G_v$.
Since $K$ is normal, it follows that $K\cap gG_Bg^{-1}=gG_vg^{-1}$.

The graph $K\backslash T$ has two types of vertices. 
For one side, we have vertices of  $\{Kg {\bf p}\}_{g\in K\backslash G /G_A}$. 
Since $K\backslash G /G_A \cong G_A/ G_A$  there is only one of this form.
For the other side, there are vertices of the form 
$\{Kg {\bf q}\}_{g\in K \backslash G/ G_B}$. 
Since $G_B=G_v\times G_C$, and $G_v\leqslant K$, we have that  
$K \backslash G /G_B=  G/K G_C \cong  G_A / G_C$. 
So we have $G_A/G_C$ vertices of the form $Kg{\bf q}$. 
It is easy to check that the set of  edges  $\{Kg {\bf e}\}_{g \in K\backslash G /G_C}$ 
of $K\backslash T$ is in bijection with $G_A/G_C$ and an edge $Kg{\bf e}$ 
connects the vertex $Kg {\bf q}$, $g\in G_A/G_C$ with the unique vertex $Kg {\bf p}$. 
By Bass-Serre theory, $K$ is a free product $*_{g\in G_A/ G_C} gG_vg^{-1}.$

Clearly, the sequence is split, since $\rho_A$ is a retraction.

(2).
It is easy to describe the isomorphism between $G$ and $W_v\rtimes G_{A_v}$ as follows: extend the map 
\[ G_v\cup G_A \rightarrow W_v\rtimes G_{A_v}, \begin{array}{llll} g_v &\mapsto & (g_v,1) & \mbox{ for } g_v\in G_v \\
g_A &\mapsto & (1,g_A) & \mbox{ for } g_A\in G_A \end{array}, \]
defined on the generators of $G$, to the isomorphism of groups $\phi_v$. 

Henceforth, we write $\phi_v(z)=(w_z^v,g_z^v)\in W_V\rtimes G_A$ for $z \in G$.

On $G_{A_v}<G$, we will consider the word length metric $l_{A_v}$ relative to $\cup_{v_i\in V\backslash \{v\}} X_{v_i}$.

Given $z\in G$, write it as a reduced product of elements of the $G_{v_i}$. Group together the elements of $\cup_{v_i\in V\backslash \{v\}} X_{v_i}$ to write 
$g={\bf f_1}g_1^a{\bf f_2}g_2^a \ldots g_n^a$ where the ${\bf f_i}$ are products of elements of $X_v$ and the $g_i^a$ lie in $G_A$. Then 
\begin{equation}
\label{eq:product}
w_z^v={\bf f_1}(g_1^a{\bf f_2}(g_1^a)^{-1})((g_1^ag_2^a){\bf f_3}(g_1^ag_2^a)^{-1}) \ldots (g_1^a\ldots g_n^a){\bf f_n}(g_1^a\ldots g_n^a)^{-1}.
\end{equation}
None of the $g_i^a$ lie in $G_C$ (where $C$ is the link of $v$), because we started with a {\em reduced} product of elements of the $G_{v_i}$. Hence, any two consecutive factors $(g_1^a\ldots g_j^a){\bf f_j}(g_1^a\ldots g_j^a)^{-1}$ and 
$(g_1^a\ldots g_{j+1}^a){\bf f_{j+1}}(g_1^a\ldots g_{j+1}^a)^{-1}$
in the product of Equation \eqref{eq:product}, lie in factor groups of $W_v=\ast_{g\in T} gG_vg^{-1}$ that correspond to distinct cosets $g_1^a\ldots g_{j}^aG_C\neq g_1^a\ldots g_{j+1}^aG_C$.
In particular, $l_{W_v}(w_z^v)$ is equal to the number of $G_v$-letters appearing in a reduced word over $X$ representing $z$. Thus  $$l_G(z)=\sum_{v\in V\ga} l_{W_v}(w_z^v)$$ for all $z\in G$.
\end{proof}

\subsection{The main result}
We recall the Property ($C_p^c$) introduced by Tessera \cite{Tessera} (see also \cite[Definition 2.1]{Hume}).
\begin{df}
A function $f\colon \N\rightarrow \R_{\geq 0}$ is called {\it concave} if $f$ is non-decreasing 
and for all $n\geq m \in \N$, we have
 $$ f(n +m) - f(n) \leq f(n) - f(n -m). $$
We will only consider such functions with $f(0)=0$. So, in order to simplify the proof of our Lemma \ref{lem:concavity} below, we add the condition $f(0)=0$ as part of our definition of concavity.
Let $f\colon \N\rightarrow \R_{\geq 0}$ be a concave function satisfying 
{\it Tessera's property $(C_p)$},
$$ \sum_{n=1}^\infty \frac{1}{n} \left( \dfrac{f(n)}{n} \right)^p <\infty .$$
We say that $f$ satisfies $C_p^c$  if, in addition, $\frac{f(n)^p}{n}$ is 
non-decreasing for all $n$ sufficiently large.
\end{df}
\begin{lemma}
\label{lem:concavity}
Let $f:\N \rightarrow \N$ be a concave function. Then $f(a)+f(b)\geq f(a+b)$ for all $a,b\in \N$.
\end{lemma}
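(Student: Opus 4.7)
The plan is to prove subadditivity by induction on $a+b$, exploiting the hypothesis that increments $f(n+m)-f(n)$ are dominated by earlier increments $f(n)-f(n-m)$.

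Base case: if either $a=0$ or $b=0$, the claim is immediate from $f(0)=0$, since then $f(a)+f(b)$ equals $f(a+b)$.

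Inductive step: suppose $a,b\geq 1$ and that the inequality has been established for every pair whose sum is strictly smaller than $a+b$. Without loss of generality $a\leq b$, so the concavity hypothesis applies with $n=b$ and $m=a$, yielding
\[ f(a+b)-f(b)\;\leq\; f(b)-f(b-a), \]
equivalently $f(a+b)\leq f(b)+\bigl(f(b)-f(b-a)\bigr)$. The pair $(a,b-a)$ has sum $b<a+b$, so the induction hypothesis gives $f(a)+f(b-a)\geq f(b)$, i.e.\ $f(b)-f(b-a)\leq f(a)$. Substituting into the displayed inequality delivers $f(a+b)\leq f(a)+f(b)$.

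There is essentially no obstacle: the only content of the argument is the observation that discrete concavity (the translation-down of increments) lets you trade the increment caused by adding $a$ to $b$ for the increment caused by adding $a$ to $b-a$, which halves the problem. The hypothesis $f(0)=0$, built into the paper's definition of concavity, is needed only to start the induction; the non-decreasing clause is not used here but is harmless.
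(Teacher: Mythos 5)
Your proof is correct, but it runs along a different track from the paper's. You invoke the concavity inequality at full strength, with $n=b$ and $m=a$, to get $f(a+b)-f(b)\leq f(b)-f(b-a)$ in a single step, and then close the gap by strong induction on $a+b$ applied to the smaller pair $(a,b-a)$; the hypothesis $f(0)=0$ enters only through the base case, exactly as you say. The paper instead uses only the unit-increment instance of concavity, $f(n+1)-f(n)\leq f(n)-f(n-1)$: summing these for $n$ ranging over $a,\dots,b$ telescopes to $f(a)+f(b)\geq f(a-1)+f(b+1)$, and iterating this ``shift one unit from $a$ to $b$'' step $a$ times lands on $f(a+b)+f(0)$. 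The trade-off is mild but real: the paper's argument shows that subadditivity already follows from the weaker hypothesis that consecutive increments are non-increasing (no condition relating $f(n+m)$ to $f(n-m)$ for general $m$ is needed), whereas your induction is shorter and makes the role of $f(0)=0$ more transparent. Your remark that monotonicity of $f$ is not used is accurate for both arguments.
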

\begin{proof}
Fix any $a,b\in \N$ and assume without loss of generality that $a\leq b$. By concavity, we have
for any $i\in \N$ that
\[ f(a+i)-f(a+i-1)\geq f(a+i+1)-f(a+i).\]
Letting $i$ vary from $0$ to $b-a$ and summing the so obtained $b-a+1$ inequalities, we obtain
\[ f(b)-f(a-1)\geq f(b+1)-f(a),\]
and so
\[ f(b)+f(a)\geq f(b+1)-f(a-1).\]
As this holds for any $a,b\in \N$, we have
\begin{eqnarray*}
f(b)+f(a) & \geq &f(b+1)-f(a-1)\\
& \geq & f(b+2)-f(a-2) \\
&\geq & \ldots \\
&\geq & f(b+a)-f(0)=f(b+a),
\end{eqnarray*}
as desired.
\end{proof}
\begin{ex}
Let $\alpha\in [0,1)$. Notice that $f\colon n\rightarrow n^{\alpha}$ satisfies ($C_p^c$) for $p\geq 1/\alpha$.
\end{ex}

Our main result of this section is the following.
\begin{thm}\label{thm:coarseemb}
Let $\Gamma$ be a finite simplicial graph, 
$\mathfrak{G}=\{G_v\mid v\in V\Gamma\}$ a family of groups each of them generated by a finite set  $X_v$, and $G=\GG$ 
the corresponding graph product. 
Suppose that all the groups $G_v$ are  uniformly coarsely embeddable in an $L_p$-space, i.e.
each group $G_v$ admits a coarse embedding $f_v$ into an $L_p$-space and 
there is a concave map $\rho'\colon\N \rightarrow \N$ with 
$\lim_{t\to \infty} \rho'(t)=+\infty$ such that
\[ \forall v \in V\Gamma, \ \forall x,y\in G_v: \ d_v(x,y)\geq \|f_v(x)-f_v(y)\| \geq \rho'(d_v(x,y)), \]
where $d_v$ is the word length distance on $G_v$ associated to $X_v$.

If $p = 1$, then there is a coarse embedding $\phi$ of $G$ into an $L_p$-space such that
\[ \forall x,y\in G\: \|\phi(x)-\phi(y)\| \geq \rho'(d(x,y)). \]
If $p>1$, then for any function $f\colon \N \rightarrow \N$ satisfying $(C_p^c)$, there is a coarse
embedding $\phi$ of $G$ into an $L_p$-space such that
\[ \forall x,y\in G\: \|\phi(x)-\phi(y)\| \geq \rho(d(x,y)), \]
where $\rho(n)=\min(\rho'(n),f(n))$.
In both cases $d$ is the word length distance on $G$ associated to $X=\cup_{v\in V\ga} X_v$.
\end{thm}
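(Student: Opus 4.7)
The plan is to use, for each vertex $v\in V\ga$, the split extension $G\cong W_v\rtimes G_{A_v}$ of Lemma \ref{lemma:splitsequence}, where $W_v=\ast_{g\in T_v}gG_vg^{-1}$, together with the additive length identity $|z|_X=\sum_{v\in V\ga} l_{W_v}(w_z^v)$ of Equation \eqref{eq:rewritingsemi}. I would build a coarse embedding $\psi_v$ of each free product $W_v$ into an $L_p$-space, pull it back to $G$ via $\widetilde\psi_v(z):=\psi_v(w_z^v)$, and assemble the pullbacks into the $\ell^p$-direct sum $\phi:=\bigoplus_v \widetilde\psi_v$.

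For each $v$, each conjugate $gG_vg^{-1}$ inherits a coarse embedding from $f_v$ via $h\mapsto f_v(g^{-1}hg)$ with the same compression function $\rho'$. These would be assembled over the Bass--Serre tree of $W_v$: writing $w\in W_v$ in reduced form $w=x_1x_2\cdots x_n$ with $x_i$ in a factor $g_iG_vg_i^{-1}$, I would define $\psi_v(w)$ as an $\ell^p$-sum indexed by the prefixes $x_1\cdots x_{i-1}$ whose $i$-th coordinate is a (possibly weighted) copy of $f_v(g_i^{-1}x_ig_i)-f_v(e)$. When $p=1$, distances in $L_1$ add along tree geodesics, so the unweighted construction yields compression exactly $\rho'$ and is automatically Lipschitz. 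When $p>1$, the unweighted construction is not Lipschitz; one weights the $i$-th coordinate by a factor built from the $(C_p^c)$-function $f$, at the cost of lowering the compression to $\min(\rho',f)=\rho$.

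For the verification, observe that the $G_{A_v}$-action on $W_v$ by conjugation permutes the free-product factors (using that $G_{C_v}$ commutes with $G_v$), hence preserves the word length $l_{W_v}$; consequently $d_{W_v}(w_x^v,w_y^v)=l_{W_v}(w^v_{x^{-1}y})$ for all $x,y\in G$. Equation \eqref{eq:rewritingsemi} combined with the pigeonhole principle then gives a vertex $v^*$ with $l_{W_{v^*}}(w^{v^*}_{x^{-1}y})\geq d(x,y)/|V\ga|$, and hence
\[ \|\phi(x)-\phi(y)\| \geq \|\psi_{v^*}(w_x^{v^*})-\psi_{v^*}(w_y^{v^*})\| \geq \rho\bigl(d(x,y)/|V\ga|\bigr) \geq \rho(d(x,y))/|V\ga|, \]
where the final inequality uses the subadditivity of $\rho$ from Lemma \ref{lem:concavity}. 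The constant $1/|V\ga|$ is absorbed by rescaling $\phi$. The Lipschitz upper bound for $\phi$ follows from the Lipschitz bounds on each $\widetilde\psi_v$ together with Equation \eqref{eq:rewritingsemi}.

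The main obstacle is the construction of $\psi_v$ in the case $p>1$: geodesics in the Bass--Serre tree of $W_v$ are unbounded in length, and weighting syllable contributions to simultaneously achieve Lipschitz control and a reasonable lower compression bound is a delicate Tessera-type balancing act. Property $(C_p^c)$ is tailor-made for exactly this: the summability condition controls the Lipschitz constant, while the ``$f(n)^p/n$ eventually non-decreasing'' clause keeps the weighted contributions comparable to $f$ on large scales. Once the free-product step is in place, the remaining pieces---the extension via the split extension, the verification that the $G_{A_v}$-action preserves $l_{W_v}$, and the finite $\ell^p$-direct sum over $v\in V\ga$---are routine.
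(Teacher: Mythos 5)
Your overall architecture coincides with the paper's: for each $v$ you pass to the split extension $G\cong W_v\rtimes G_{A_v}$ of Lemma \ref{lemma:splitsequence}, pull back an embedding of the free product $W_v$ via $z\mapsto w_z^v$, check the Lipschitz bound using the conjugation-invariance of $l_{W_v}$ (which you make more explicit than the paper does), take the finite $\ell^p$-direct sum, and recover the lower bound from the length identity \eqref{eq:rewritingsemi} together with the subadditivity of $\rho$ from Lemma \ref{lem:concavity}. Your pigeonhole variant of the last step is equivalent to the paper's, which instead averages $\frac{1}{n}\sum_i\rho(l_{W_i}(w_z^i))\geq\frac{1}{n}\rho(\sum_i l_{W_i}(w_z^i))$; both lose only a multiplicative constant.

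The genuine gap is the construction of $\psi_v$ on the free product $W_v=\ast_{g\in T_v}gG_vg^{-1}$, which you correctly identify as ``the main obstacle'' but do not resolve. For $p>1$ you never specify the weights on the prefix-indexed coordinates, nor verify that the weighted sum is Lipschitz, nor that the resulting compression function is $\min(\rho',f)$; saying that $(C_p^c)$ is ``tailor-made'' for this balancing act is a statement of intent, not a proof, and this step is precisely the non-routine content of the theorem. The paper does not construct this embedding by hand either: it imports it wholesale as Theorem 4.2 of \cite{Hume} (a Tessera-type embedding theorem for free products of uniformly coarsely embeddable groups, which is where the hypotheses $(C_p^c)$ and the dichotomy between $p=1$ and $p>1$ come from). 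So your instinct about where the difficulty lives and what tool is needed is exactly right, but as written the proposal replaces the one citation that carries the real weight with an unexecuted sketch; either carry out the Tessera/Hume construction in full or cite it.
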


We state an immediate corollary before proceeding to the proof.
\begin{cor}\label{cor:neqcomp}
Let $\ga$ be a finite graph, $\G$ a collection of finitely generated groups indexed by $V\ga$ and $G=\GG$ the corresponding graph product. 
For each $p\geq 1$, we have 
\[ \alpha_p(G)=\min\{\alpha^*_p(G_v)\mid v\in V\Gamma\} .\]
\end{cor}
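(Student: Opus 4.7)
My plan for the lower bound is to feed Theorem \ref{thm:coarseemb} with equivariant cocycles coming from the vertex groups. Fix $\alpha<\min_v \alpha^*_p(G_v)$. For each $v$, choose an affine isometric action $(\pi_v,b_v)$ of $G_v$ on an $L_p$-space with equivariant compression exponent strictly greater than $\alpha$; combining with Lemma \ref{lem:trick} and the rescaling used in the proof of Corollary \ref{cor:lowerbound}, I arrange a single constant $C>0$ with $\|b_v(g)\|\geq C|g|_{X_v}^\alpha$ uniformly in $v$. Each cocycle $b_v$ is automatically Lipschitz, so it is a coarse embedding of $G_v$ into an $L_p$-space with the common concave compression function $\rho'(n)=Cn^\alpha$, which tends to infinity. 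Theorem \ref{thm:coarseemb} then produces a coarse embedding of $G$: for $p=1$ it gives compression $\geq\rho'$ directly; for $p>1$, set $f(n)=\lceil n^{\max(\alpha,1/p)}\rceil$, which is concave and satisfies $(C_p^c)$, so the theorem yields a coarse embedding with compression $\geq\min(\rho',f)$ of exponent $\alpha$. Letting $\alpha\uparrow \min_v\alpha^*_p(G_v)$ gives $\alpha_p(G)\geq \min_v\alpha^*_p(G_v)$.

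\textbf{Upper bound.} For the complementary bound, I observe that each inclusion $G_v\hookrightarrow G$ is an isometric embedding: by the normal form theorem, $|g|_X=|g|_{X_v}$ whenever $g\in G_v$. Consequently, any coarse embedding of $G$ restricts to a coarse embedding of $G_v$ whose compression exponent is at least as large, so $\alpha_p(G)\leq \alpha_p(G_v)$. To sharpen this to the $\alpha_p(G)\leq \alpha^*_p(G_v)$ that the statement demands, I would exploit the split extension $G=W_v\rtimes G_{A_v}$ of Lemma \ref{lemma:splitsequence}, in which $G_v$ sits as a direct factor of the kernel $W_v=*_{g\in G_{A_v}/G_{C_v}}\,gG_vg^{-1}$. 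Using the canonical retraction $\rho_v\colon G\to G_v$ together with an averaging construction along the $G_v$-coset structure indexed by the transversal $T_v$ (in the spirit of \cite{guedad} and \cite{CTV}), the goal is to promote a given non-equivariant coarse embedding of $G$ into a $G_v$-equivariant coarse embedding of $G_v$ of no worse compression exponent.

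\textbf{The hard part.} The delicate step is carrying out this averaging in $L_p$ rather than Hilbert space: one must verify that the averaged map indeed lies in an $L_p$-space (controlling the $L_p$-norms of sums over the transversal $T_v$) and that the compression is not degraded. Its interplay with Tessera's property $(C_p^c)$ when $p\neq 2$ is the technical heart of the argument. Once this is settled, the two bounds collapse to give $\min_v\alpha^*_p(G_v)\leq \alpha_p(G)\leq \min_v\alpha^*_p(G_v)$, i.e.\ the claimed equality.
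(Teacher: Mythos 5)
Your lower bound is essentially the argument the paper intends, but you have handicapped it: Theorem \ref{thm:coarseemb} only asks for coarse embeddings $f_v$ of the vertex groups, not for cocycles, so you should feed it arbitrary (non-equivariant) embeddings realizing any $\alpha<\min_v\alpha_p(G_v)$. Run that way, your construction gives $\alpha_p(G)\geq\min_v\alpha_p(G_v)$, and combined with your correct observation that each $G_v$ embeds isometrically in $G$ (so $\alpha_p(G)\leq\alpha_p(G_v)$) this yields $\alpha_p(G)=\min_v\alpha_p(G_v)$. That is the statement actually intended here --- it is how the corollary is phrased in the introduction, and it is the only version that is an ``immediate'' consequence of Theorem \ref{thm:coarseemb}; the $\alpha_p^*$ on the right-hand side of the displayed statement is a typo.

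The sharpened upper bound you are chasing, $\alpha_p(G)\leq\alpha_p^*(G_v)$, is false in general, and the ``hard part'' of your proposal cannot be carried out. Taking $\ga$ to be a single vertex with $G_v=\F_2$ (a perfectly good graph product), the claimed equality would force $\alpha_2(\F_2)\leq\alpha_2^*(\F_2)$; but $\alpha_2(\F_2)=1$ (trees embed coarsely into Hilbert space with compression exponent arbitrarily close to $1$), while $\alpha_2^*(\F_2)=1/2$ by Lemma \ref{lem:compF2}. The averaging device you invoke, which promotes a non-equivariant embedding to an equivariant one of no worse compression, is exactly the Gromov/\cite{CTV} argument, and it requires an invariant mean --- it works only for amenable groups. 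For a non-amenable vertex group there is no mean over $G_v$ or over the transversal $T_v$ to average against, and the conclusion it would deliver is simply wrong. So the obstruction is not the $L_p$-analysis of sums over $T_v$ or the interplay with $(C_p^c)$: no amount of technical work settles that step. The repair is to prove the corrected statement, for which the trivial restriction bound $\alpha_p(G)\leq\alpha_p(G_v)$ you already have is all that is needed.
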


\begin{proof}[Proof of Theorem \ref{thm:coarseemb}]
Write $V\Gamma=\{v_1,v_2,\ldots ,v_n\}$ and we follow the notation of Lemma \ref{lemma:splitsequence} (2), by setting $G_i=G_{v_i}$, $W_i=W_{v_i}$ and $\mathcal{G}_i=G_{A_{v_i}}$ for $i=1,\dots, n$. 

By $d_{W_i}$ we denote the length distance on $W_i$ induced by $l_{W_i}$.

By Theorem 4.2 in \cite{Hume}, we have the following:
\begin{enumerate}
\item[(i)] If $p = 1$, then there is a coarse embedding $\phi_i'$ of $W_i$ into an $L_p$-space such that
\[ \forall x,y\in W_i: d_{W_i}(x,y) \geq \|\phi_i'(x)-\phi_i'(y)\| \geq \rho(d_{W_i}(x,y)), \]
where $\rho(n)=\rho'(n)$.
\item[(ii)] If $p>1$, then for any function $f\colon \N \rightarrow \N$ satisfying $(C_p^c)$, there is a coarse
embedding $\phi_i'$ of $W_i$ into an $L_p$-space such that
\[ \forall x,y\in W_i: d_{W_i}(x,y)\geq \|\phi_i'(x)-\phi_i'(y)\| \geq \rho(d_{W_i}(x,y)), \]
where $\rho(n)=\min(\rho'(n),f(n))$.
\end{enumerate}
There is a straightforward way of extending $\phi_i'$ 
to a map $\phi_i\colon G=W_i\rtimes \mathcal{G}_i\rightarrow L_p(X,\nu)$. 
Indeed, given $z\in G$ and $i\in \{1,2,\ldots ,n\}$, 
we can write $z=(w_z^i,g_z^i)\in W_i\rtimes \mathcal{G}_i$ and define
\[ \phi_i: (w_z^i,g_z^i)  \mapsto \phi'(w_z^i). \]
Let us first show that $\phi_i$ is Lipschitz.
As $\phi_i'$ is Lipschitz, we know that
\[ \|\phi_i(x)-\phi_i(y) \|  =  \| \phi_i'(w_x^i)-\phi_i'(w_y^i)\| \leq   d_{W_i}(w_x^i,w_y^i)=l_{W_i}((w_x^i)^{-1}w_y^i). \]
On the other hand,
\begin{eqnarray*}
x^{-1}y&=&(w_{x}^i,g_{x}^i)^{-1}(w_y^i,g_y^i) \\
&=& ((g_x^i)^{-1}(w_x^i)^{-1}g_x^i(g_x^i)^{-1}w_y^ig_x^i,(g_x^i)^{-1}g_y^i)\\
&=& ((g_x^i)^{-1}(w_x^i)^{-1}w_y^ig_x^i,(g_x^i)^{-1}g_y^i),
\end{eqnarray*}
so $w_{x^{-1}y}^i=(g_x^i)^{-1}(w_x^i)^{-1}w_y^ig_x^i$ and
\[ l_{W_i}((w_x^i)^{-1}w_y^i)=l_{W_i}((g_x^i)^{-1}(w_x^i)^{-1}w_y^ig_x^i)=l_{W_i}(w_{x^{-1}y}^i) \leq l_G(x^{-1}y).\]
The last inequality follows from Equation \eqref{eq:rewritingsemi}. Define $\phi=(\oplus^p)_{i=1}^n \phi_i$, where $\oplus^p$ denotes the $l_p$-direct sum. Then for all $x,y\in G$, we have
\[ \|\phi(x)-\phi(y)\|_p =(\sum_{i=1}^n \|\phi_i(w_x^i)-\phi_i(w_y^i)\|_p^p)^{1/p} \leq \sum_{i=1}^n \|\phi_i(w_x^i)-\phi_i(w_y^i)\|_p \leq nCd(x,y), \]
so that $\phi$ is also Lipschitz. On the other hand, for $z=x^{-1}y$ we have
\[ \|\phi(x)-\phi(y)\|_p \geq \frac{1}{n} \sum_{i=1}^n \|\phi_i(w_x^i)-\phi_i(w_y^i)\|_p \geq \frac{1}{n} \sum_{i=1}^n \rho(l_{W_i}(w^i_z))\geq \frac{1}{n}\rho(\sum_{i=1}^n l_{W_i}(w^i_z)),
\] by Lemma \ref{lem:concavity}. 

Now, by Lemma \ref{lemma:splitsequence} (2), $l_X(z)=\sum_{i=1}^n l_{W_i}(w^i_z)$ and hence, the result follows.

\end{proof}

\section{A bound for the asymptotic dimension of a graph product}
\label{s:adim}

The asymptotic dimension is an invariant of metric spaces
that was introduced by M. Gromov in \cite{Gromov91} and has been broadly studied since then. Our main reference for this section is \cite{BellDranishnikov}.

\begin{df}\label{def:adim}
A metric space $(M,d)$ has asymptotic dimension $\leq n$ if for every $d<\infty$, there exists a uniformly bounded cover $\mathcal{V}$ of $M$ with $d$-multiplicity $\leq n+1$, i.e. denoting the ball of radius $d$ and centre $x$ by $B_d(x)$, we have
$$\sup_{x\in M} \lvert \{V\in \mathcal{V} \mid V\cap B_d(x)\neq \emptyset \}\rvert \leq n+1.$$

We write $\adim \Gamma\leq n$ if the asymptotic dimension of $\Gamma$ is at most $n,$ and  $\adim \Gamma=n$ if $\adim \Gamma\leq n$ and  $\adim \Gamma\nleq n-1.$
\end{df}
The asymptotic dimension is invariant under coarse equivalence. Hence, we can define the asymptotic dimension of a finitely generable group as the asymptotic dimension of its Cayley graph with respect to a finite generating set.

We will make use of the following facts.

\begin{thm}\cite[Theorem 63, Theorem 82]{BellDranishnikov}. 
For any finitely generated groups $A$ and $B$ 
there is the inequality\begin{equation}\label{eq:dprod}
\adim(A \times B) \leq   \adim A + \adim B.
\end{equation}
If $C$ is a common subgroup of $A$ and $B$, then
\begin{equation}\label{eq:faprod}
\adim(A *_C B) \leq  \max\{ \adim A, \adim B, \adim C + 1\}.
\end{equation}
\end{thm}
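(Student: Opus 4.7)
Both inequalities are classical results of Bell--Dranishnikov, and I would derive them from a single tool: the Hurewicz-type estimate for asymptotic dimension, which says that if $f\colon X\to Y$ is a coarse map and there is a uniform bound $k$ such that for every bounded $S\subseteq Y$ the preimage $f^{-1}(S)$ has asymptotic dimension $\leq k$ (with the $k$-dimensional bounded covers chosen uniformly over all such $S$), then $\adim X \leq \adim Y + k$. A refinement for group actions on trees lets one replace the additive tree-contribution in the amalgamated case by an additive edge-stabilizer contribution.

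\textbf{Product case.} Apply the Hurewicz estimate to the first projection $\pi_A\colon A\times B \to A$. It is $1$-Lipschitz (hence coarse), and each preimage $\pi_A^{-1}(S)=S\times B$ is coarsely equivalent to $B$, so the fibers have asymptotic dimension $\adim B$ uniformly in $S$. The estimate then gives $\adim(A\times B)\leq \adim A + \adim B$. The proof of the Hurewicz estimate itself uses the multiplicity characterization of $\adim$: for each scale $R$, pick a uniformly bounded cover of $A$ with $R$-multiplicity $\leq \adim A + 1$, lift each member $U$ to $U\times B$, then cover $U\times B$ by a uniformly bounded $(\adim B+1)$-colored $R$-disjoint cover pulled back from $B$. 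Careful scale engineering (the Lebesgue number of the $A$-cover is chosen to dominate the $B$-diameter) yields total $R$-multiplicity $\adim A + \adim B + 1$ instead of the naive product bound.

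\textbf{Amalgamated product case.} Consider the action of $G=A*_C B$ on its Bass--Serre tree $T$: vertex stabilizers are conjugate to $A$ or $B$, edge stabilizers are conjugate to $C$, and $\adim T = 1$. I would invoke the refined Hurewicz theorem for group actions on trees: if $G$ acts by isometries on a tree with vertex stabilizers of $\adim\leq n$ and edge stabilizers of $\adim\leq m$, then $\adim G \leq \max\{n,m+1\}$. Substituting $n=\max\{\adim A,\adim B\}$ and $m=\adim C$ yields the claimed $\max\{\adim A,\adim B,\adim C+1\}$. The refined theorem is proved by building equivariant covers layered along $T$: bounded vertex covers coming from the $\adim$ bound on stabilizers are glued across edges by a bounded edge-stabilizer cover. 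The $\max$ between the vertex and tree contributions (rather than their sum) is achieved because each vertex piece already realises the right multiplicity away from the edges, while the $+1$ on the edge term records the one-dimensionality of the tree.

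\textbf{Main obstacle.} The conceptual content in both parts is the Hurewicz estimate itself: a naive gluing of a base cover with fiber covers gives multiplicative multiplicity $(n+1)(m+1)$ rather than the additive $n+m+1$. Arranging the scales so that within a single ``color'' the constituent fiber covers remain $R$-disjoint in the product metric, and, in the tree case, so that the two adjacent vertex covers are coherent along the shared edge-stabilizer cover of $C$, is the real technical work.
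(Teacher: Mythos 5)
This statement is quoted verbatim from Bell--Dranishnikov and the paper offers no proof of it --- it is used as a black box --- so there is no internal argument to compare against. Your sketch is a faithful reconstruction of the standard proofs from the cited literature: the product inequality does follow from the Hurewicz-type estimate applied to the projection $A\times B\to A$ (Bell--Dranishnikov actually prove their Theorem~63 directly by interleaving an $(\adim A+1)$-colored cover of $A$ with $(\adim B+1)$-colored covers of $B$ at a larger scale, but this is exactly the same bookkeeping you describe, and the Hurewicz theorem packages it), and the amalgam inequality is Dranishnikov's theorem on fundamental groups of finite graphs of groups, proved via the Bass--Serre tree as you say. The one caveat is that your ``refined Hurewicz theorem for actions on trees'' with conclusion $\adim G\leq\max\{n,m+1\}$ \emph{is} Theorem~82 in only slightly greater generality, so quoting it does not by itself constitute a proof; the genuine content is the construction of the layered equivariant covers in which the vertex-group contribution enters as a $\max$ rather than a sum, and a naive application of the unrefined Hurewicz estimate to the tree action would only give the weaker bound $1+\max\{\adim A,\adim B\}$ (which can exceed the stated bound when $\adim C<\max\{\adim A, \adim B\}$). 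You correctly identify where that work lies, so as a proof outline at the level of detail the paper itself demands, this is sound.
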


Let $\ga$ be a graph and  $\G$ be a family of finitely generated groups indexed by vertices of $V\ga$ and let $G=\GG$. We would like to derive information about the asymptotic dimension of $G$ in terms of the asymptotic dimensions of the vertex groups. By Equations \eqref{eq:dprod} and \eqref{eq:faprod}, it makes sense to consider the subgroups $G_S$ where $S\subseteq V\ga$ spans a complete graph. Clearly, the asymptotic dimension can be bounded from below by the asymptotic dimensions of each subgroup $G_S$. On the other hand, one can ask whether the asymptotic dimension of $G$ equals the maximum of the asymptotic dimensions of the $G_S$ where $S$ is a complete subgraph. This claim is not true: consider the free product of two finite groups and use the fact that the asymptotic dimension of a group is 0 if and only if the group is finite.

As the previous example indicates, the main problem is related to the case where the vertex groups are finite. We remedy this by considering $\max(\adim G_v,1)$ instead of just $\adim G_v$. 
\begin{thm}\label{thm:adim}
Let $\ga$ be a finite simplicial graph and let $\G$ be a family of finitely generated groups indexed by vertices of  $V\ga$. Let $G=\GG$. 
Let $\cC$ be the collection of subsets of $V\ga$ spanning a complete graph.
Then
\begin{equation}\label{eq:adimformula}
\adim G \leq \max_{C\in \cC}\sum_{v\in C}\max(1,\adim G_v). \end{equation}
\end{thm}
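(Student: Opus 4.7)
The plan is to argue by induction on $|V\ga|$. The base case $|V\ga|=1$ is immediate: $G=G_v$, the only clique is $\{v\}$, and the right-hand side of \eqref{eq:adimformula} equals $\max(1,\adim G_v)\geq \adim G_v$.

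For the inductive step I would pick any vertex $v\in V\ga$ and apply Lemma \ref{lem:A*_CB} to write $G = G_A *_{G_C} G_B$ with $A = V\ga\setminus\{v\}$, $C = \link_\ga(v)$, and $B = \{v\}\cup C$. Because $v$ is adjacent to every vertex of $C$, we furthermore have $G_B = G_v\times G_C$. Combining \eqref{eq:faprod} with \eqref{eq:dprod} then gives
\[\adim G \leq \max\bigl\{\adim G_A,\ \adim G_v + \adim G_C,\ \adim G_C + 1\bigr\},\]
so it will be enough to bound each of these three quantities by the right-hand side of \eqref{eq:adimformula}.

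For $\adim G_A$, the induction hypothesis applied to the graph product $G_A$ over $\ga_A$ (which has one fewer vertex) yields a clique $C''$ in $\ga_A$ with $\adim G_A \leq \sum_{u\in C''}\max(1,\adim G_u)$, and $C''$ is still a clique of $\ga$. For the other two terms, the induction hypothesis applied to the graph product $G_C$ over $\ga_C$ produces a clique $C'\subseteq C$ satisfying $\adim G_C\leq \sum_{u\in C'}\max(1,\adim G_u)$; since $C'\subseteq \link_\ga(v)$, the set $\{v\}\cup C'$ is a clique of $\ga$. Thus $\adim G_v + \adim G_C \leq \max(1,\adim G_v)+\sum_{u\in C'}\max(1,\adim G_u)$ is a sum of the required form. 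For $\adim G_C + 1$ I would use the same clique $\{v\}\cup C'$, the extra $+1$ being absorbed by $\max(1,\adim G_v)\geq 1$.

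The key idea, already present in Dranishnikov's argument for right-angled Coxeter groups, is exactly this $\max(1,\cdot)$ trick: it is what allows the $+1$ appearing in the amalgam inequality at each stage of the recursion to be absorbed into the contribution of the newly peeled-off vertex $v$ once it joins the optimal clique inside its link. The main obstacle is therefore not conceptual but a matter of careful bookkeeping; in particular, one should verify the degenerate case $\link_\ga(v)=\emptyset$ (so $G_C$ is trivial and $G = G_A * G_v$), where the clique $\{v\}$ alone contributes $\max(1,\adim G_v)\geq 1$ and covers the $\adim G_C + 1 = 1$ term, and one must formulate the induction at the level of arbitrary finite simplicial graphs so it applies equally to $\ga_A$ and $\ga_C$.
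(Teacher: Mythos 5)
Your proof is correct and follows essentially the same route as the paper: induction on $|V\ga|$, the amalgam decomposition $G=G_A*_{G_C}G_B$ from Lemma \ref{lem:A*_CB}, and absorption of the $+1$ from \eqref{eq:faprod} into $\max(1,\adim G_v)$. The only cosmetic difference is that you bound the star factor by $\adim G_v+\adim G_C$ via \eqref{eq:dprod} uniformly, which lets you avoid the paper's separate case where $\{v\}\cup\link_\ga(v)=V\ga$.
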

We remark that this theorem generalizes the result of Dranishnikov \cite{Dranishnikov}
 who, in our terminology, bounds the asymptotic dimension of a right-angled Coxeter
  group by the size of the   biggest complete subgraph in the defining graph. Notice also that our bound is often sharp, see Example \ref{ex:adim} below. 

\begin{proof}
We argue by induction on $|V\ga|$.
If $V\ga=\{v\}$, then $\adim G=\adim G_v\leq \max(\adim G_v, 1)$ and hence \eqref{eq:adimformula} holds.

Assume that $|V\ga|>1$ and that the theorem holds for graph products over graphs with fewer vertices.
Denote $m=\max_{C\in \cC}\sum_{v\in C}\max(1,\adim G_v)$ and choose some vertex $v\in V\ga$. For any $C\in \cC\cap \link_\ga(v)$, we have that $C\cup \{v\}\in \cC$, so 
\begin{equation}\label{eq:adimlink}
\max_{C\in \cC, C\subseteq \link_\ga(v)}\sum_{v\in C}\max(1,\adim G_v)\leq m -  \max(1,\adim G_v).
\end{equation}

Set $A=\{v\}\cup \link_\ga(v)$, $B=V\ga\backslash \{v\}$ and $C=\link_\ga(v)$.
Since $|B|<|V\ga|$, by induction hypothesis, \eqref{eq:adimformula} holds for $G_B$, 
and since $B\subseteq V\ga$, $\adim G_B \leq m$.
As $|C|<|V\ga|$, formula \eqref{eq:adimlink} implies $\adim G_C \leq m-\max(1,\adim G_v)$.

{\bf Case 1:} $A=\link_\ga (v)\cup \{v\}=V\ga$. 

In this case $G= G_{v}\times G_C$. By \eqref{eq:dprod}, $\adim G\leq \adim G_v+ \adim G_C\leq  \adim G_v +m -\max(1,\adim G_v)\leq m$, as desired.

{\bf Case 2:} $A=\link_\ga(v)\cup\{v\}\neq V\ga$.\\
In this case $|A|<|V\ga|$ and since $A\subseteq V\ga$, using the induction hypothesis, we conclude that $\adim G_A\leq m$. Now, using \eqref{eq:faprod} we have that 
\begin{eqnarray*}
 \adim(G) &=& \adim(G_A*_{G_C}G_B)\\
 &\leq & \max(\adim(G_A),\adim(G_B),\adim(G_C)+1)\\
&\leq & \max(m,m,m-\max(\adim G_v,1)+1)\leq m ,
\end{eqnarray*}
as desired.
\end{proof}
\begin{ex}\label{ex:adim}
Let $\D_{\infty}$ denote the infinite dihedral group, ie. the free product  $\Z/2*\Z/2\Z$. 
Then $G= \Pi_{i=1}^n \D_{\infty}\times \Pi_{i=1}^m \Z$ 
has a natural structure as a graph product where all the vertex groups are either 
$\Z$ or $\Z/2\Z$. 
The underlying graph is a join of $n$ sets of two elements and $m$ sets of one element.
Then, the biggest clique has size $m+n$, and thus from Theorem \ref{thm:adim}, $\adim G\leq m+n$. The group $G$ is virtually
$\Z^{m+n}$ and hence $\adim G=m+n$. 
This shows the bound predicted by the Theorem \ref{thm:adim} is sharp.
\end{ex}

\medskip

\noindent{\textbf{{Acknowledgments}}}
Part of the research of the paper was conducted when the first author was visiting the Erwin Schr\"odinger International Institute in Vienna.
The first author is supported by the swiss SNF grant: FN 200020-137696/1 and  by the MCI (Spain) through project MTM2011-25955. The second author is a Marie Curie IEF research fellow.


\end{document}